\newtheorem{theorem}{Theorem}[section]
\newtheorem{lemma}[theorem]{Lemma}
\newtheorem{proposition}[theorem]{Proposition}
\newtheorem{corollary}[theorem]{Corollary}
\theoremstyle{definition}
\newtheorem{definition}[theorem]{Definition}
\newtheorem{remark}[theorem]{Remark}
\DeclareMathOperator{\Ext}{Ext}
\DeclareMathOperator{\Hom}{Hom}
\DeclareMathOperator{\cok}{cok}
\DeclareMathOperator{\im}{Im}
\newcommand{\cat}[1]{\mathcal{#1}}           
\newcommand{\tensor}{\otimes}
\newcommand{\class}[1]{\mathcal{#1}}   
\newcommand{\Z}{\mathbb{Z}}
\newcommand{\ch}{\textnormal{Ch}(R)}
\newcommand{\chop}{\textnormal{Ch}(R^\circ)}
\newcommand{\cha}[1]{\textnormal{Ch}(\mathcal{#1})}
\newcommand{\dwclass}[1]{dw\widetilde{\class{#1}}}
\newcommand{\exclass}[1]{ex\widetilde{\class{#1}}}
\newcommand{\rightperp}[1]{#1^{\perp}}
\newcommand{\leftperp}[1]{{}^\perp #1}
\newcommand{\homcomplex}{\mathit{Hom}}
\begin{document}

\title[Acyclic complexes of pure-projective modules]{The homotopy category of acyclic complexes of pure-projective modules}

\author{James Gillespie}
\address{J.G. \ Ramapo College of New Jersey \\
         School of Theoretical and Applied Science \\
         505 Ramapo Valley Road \\
         Mahwah, NJ 07430\\ U.S.A.}
\email[Jim Gillespie]{jgillesp@ramapo.edu}
\urladdr{http://pages.ramapo.edu/~jgillesp/}

\date{\today}

\keywords{homotopy category of chain complexes; pure projective module; derived category, abelian model category} 

\thanks{2020 Mathematics Subject Classification. 18N40, 18G35, 18G25}

\begin{abstract}
Let $R$ be any ring with identity. We show that the homotopy category of all acyclic chain complexes of pure-projective $R$-modules is a compactly generated triangulated category. We do this by constructing abelian model structures that put this homotopy category into a recollement with two other compactly generated triangulated categories: The usual derived category of $R$ and the pure derived category of $R$. 
\end{abstract}

\maketitle

\section{Introduction}

Let $R$ be a ring with identity and $K(R)$ the homotopy category of all chain complexes of (say left) $R$-modules. 
In~\cite{gillespie-K-flat} we studied the Verdier quotient $$\class{D}_{\textnormal{K-flat}}(R):=K(R)/K\textnormal{-Flat}$$ introduced by Emmanouil in~\cite{emmanouil-K-flatness-and-orthogonality-in-homotopy-cats}, where $K\textnormal{-Flat}$ denotes the thick subcategory of $K(R)$ consisting of all K-flat complexes. Recall that Spaltenstein's K-flat complexes  are those chain complexes $X$ for which the chain complex tensor product $- \otimes_R X$ preserves acyclicity~\cite{spaltenstein}. Emmanouil shows that $\class{D}_{\textnormal{K-flat}}(R)$ is equivalent to $K_{ac}(\class{PI})$, the chain homotopy category of all acyclic (not necessarily pure acyclic) complexes of pure-injective $R$-modules. Here we are letting $\class{PI}$ denote the class of all pure-injective modules, that is, the modules that are injective with respect to pure monomorphisms. 

Using the approach of Hovey's abelian model categories, it was shown in~\cite{gillespie-K-flat} that $\class{D}_{\textnormal{K-flat}}(R)$ is a compactly generated triangulated category, and that we have a recollement
\[
\xy
(-28,0)*+{\class{D}_{\textnormal{K-flat}}(R)};
(0,0)*+{\class{D}_{pur}(R)};
(25,0)*+{\class{D}(R)};
{(-19,0) \ar (-10,0)};
{(-10,0) \ar@<0.5em> (-19,0)};
{(-10,0) \ar@<-0.5em> (-19,0)};
{(10,0) \ar (19,0)};
{(19,0) \ar@<0.5em> (10,0)};
{(19,0) \ar@<-0.5em> (10,0)};
\endxy
\] 
where $\class{D}(R)$ is the usual derived category of $R$, and $\class{D}_{pur}(R)$ is the pure derived category of $R$.
Each of the three triangulated categories is the homotopy category of an abelian model structure, and in each case the fibrant objects form some class of complexes of pure-injective modules. When restricting the first two categories to their fibrant objects the recollement becomes 
\[
\xy
(-28,0)*+{K_{ac}(\class{PI})};
(0,0)*+{K(\class{PI})};
(25,0)*+{\class{D}(R).};
{(-19,0) \ar (-10,0)};
{(-10,0) \ar@<0.5em> (-19,0)};
{(-10,0) \ar@<-0.5em> (-19,0)};
{(10,0) \ar (19,0)};
{(19,0) \ar@<0.5em> (10,0)};
{(19,0) \ar@<-0.5em> (10,0)};
\endxy
\] 
This is a pure analog of a well-known recollement due to Krause~\cite{krause-stable derived cat of a Noetherian scheme}.

It is natural to ask if there are dual results, and proving this is the purpose of the present article. So throughout, we will let $\class{PP}$ denote the class of all pure-projective $R$-modules. These are precisely the modules that are direct summands (retracts) of direct sums of finitely presented modules. Our goal is to show that $K_{ac}(\class{PP})$, the chain homotopy category of all acyclic chain complexes of pure-projectives, is compactly generated, and fits into a recollement analogous to the one above. 

In Theorem~\ref{them-proj-cot-pair} we provide a general method for constructing abelian model structures on chain complexes, having certain complexes of pure-projectives as their cofibrant objects. By Hovey's correspondence these model structures are equivalent to ``pure-projective'' model structures on the exact category $\ch_{pur}$ --- This is the category of chain complexes but along with the proper class of short exact sequences that are \emph{pure exact in each degree}.
We apply Theorem~\ref{them-proj-cot-pair}  throughout the paper to construct three abelian model structures which induce the recollement we seek.  Below we describe these three constructions in more detail. To do so, we note first that given any $R$-module $M$, we let $S^n(M)$ denote the chain complex consisting only of $M$ in degree $n$ and 0 elsewhere.



\begin{enumerate}
\item In Sections~\ref{sec-pp} and~\ref{sec-proj-cot-pairs}  we take a different approach to reestablish some results that are due to Krause, Stovicek, and Emmanouil (see~\cite{krause-approximations and adjoints, stovicek-purity, emmanouil-pure-acyclic-complexes, emmanouil-relation-K-flatness-K-projectivity}). Our proofs are different though, making use of some techniques from~\cite[App.~A]{bravo-gillespie-hovey}.
We let $\dwclass{PP}$ denote the class of all complexes that are \emph{degreewise} pure-projective. That is, $P \in \dwclass{PP}$ just means $P$ is a chain complex of pure-projective modules. On the other hand, we let $\class{A}_{pur}$ denote the class of all pure acyclic complexes.  
 Generalizing techniques from~\cite[App.~A]{bravo-gillespie-hovey}, we give a hands on direct proof  that $(\dwclass{PP}, \class{A}_{pur})$ is a complete cotorsion pair in the exact category $\ch_{pur}$, by showing it to be cogenerated by the set of all bounded above complexes of finitely presented modules; see Theorem~\ref{thm-bounded above complexes of finitely presented cogenerate}. It follows that this cotorsion pair is in fact cogenerated by the set $\{S^n(M_i)\}$ where this set ranges through all integers $n$ and all finitely presented modules $M_i$. This gives us a finitely generated (and monoidal) abelain model structure on chain complexes whose homotopy category is $\class{D}_{pur}(R)$, the pure derived category of $R$. See Corollary~\ref{cor-pure-proj-model}.

\item In Section~\ref{sec-acyclic-pp} we turn to acyclic complexes of pure-projectives.  We let $\exclass{PP}$ denote the class of all acyclic  complexes of pure-projective $R$-modules (just \emph{exact} in the usual sense, not pure exact). We let $\class{V} = \rightperp{\exclass{PP}}$ denote the right $\Ext^1$-orthogonal of $\exclass{PP}$ with respect to the exact structure $\ch_{pur}$. Again using techniques from~\cite[App.~A]{bravo-gillespie-hovey}, we show directly that $(\exclass{PP}, \class{V})$ is a complete cotorsion pair in the exact category $\ch_{pur}$, by showing it to be cogenerated by a set. See
Theorem~\ref{theorem-acyclic complexes of pure-projectives}. We will see that the class $\class{V}$ determines a thick subcategory of $K(R)$, and  Corollary~\ref{cor-exPP-model} says that $(\exclass{PP}, \class{V})$ determines an abelian model structure on chain complexes whose homotopy category coincides with the Verdier quotient $K(R)/\class{V}$. Moreover, $K(R)/\class{V} \cong K_{ac}(\class{PP})$, the chain homotopy category of all acyclic complexes of pure-projective modules. It follows that $K(R)/\class{V} \cong K_{ac}(\class{PP})$ is a well-generated triangulated category. But as we explain below, by the end of the paper we will show it to in fact be a compactly generated category. 

Note that the class $\class{V}$ is a dual to the class of all K-flat complexes, but we do not provide a further study of the complexes in $\class{V}$. Emmanouil and Kaperonis will provide an in depth look at the complexes in the class $\class{V}$, which they call \emph{K-absolutely pure}. Indeed they are dual to the K-flat complexes in many ways that will convince the reader in their forthcoming preprint~\cite{emmanouil-kaperonis-K-flatness-pure}. 

Our work here also implies (see Theorem~\ref{theorem-derived-cat-K-pur}) that $\class{D}(R)$, the usual derived category of $R$, is equivalent to the chain homotopy category of all K-absolutely pure complexes with pure-projective components.  
\item In Section~\ref{sec-dg-pp} we construct a ``pure-projective'' variant of the standard projective model structure for $\class{D}(R)$. Recall that $\{S^n(R)\}$ cogenerates the standard projective model structure on $\ch$, where the cofibrant objects are the so-called DG-projective complexes. (See~\cite[Example~3.3]{hovey}.) However, if you let $\{S^n(R)\}$ cogenerate a complete cotorsion pair with respect to the exact category $\ch_{pur}$, then the result is an equivalent model structure whose cofibrant objects are what we call \emph{DG-pure-projective}. These are chain complexes $P$ for which each $P_n$ is a pure-projective $R$-module and for which any chain map $P \xrightarrow{} E$, to any exact complex $E$, is null homotopic. The main results about this model structure are stated in Corollary~\ref{cor-DG-pure-projective-model}.
\end{enumerate}

The three model structures described in (1), (2) and (3)  are interrelated in a simple way.
In particular, let $\class{E}$ denote the class of all acyclic chain complexes. This is the class of trivial objects in the DG-pure-projective model structure above in (3). The classes of cofibrant objects in (1) and (2) satisfy $\dwclass{PP}\cap \class{E} = \exclass{PP}$, and so the following desired result is deduced by applying the main result of~\cite{gillespie-recollement}.

\begin{theorem}[See Theorem~\ref{theorem-recollement-pp}]
We have a recollement of triangulated categories
\[
\xy
(-28,0)*+{K(R)/\class{V}};
(0,0)*+{\class{D}_{pur}(R)};
(25,0)*+{\class{D}(R)};
{(-19,0) \ar (-10,0)};
{(-10,0) \ar@<0.5em> (-19,0)};
{(-10,0) \ar@<-0.5em> (-19,0)};
{(10,0) \ar (19,0)};
{(19,0) \ar@<0.5em> (10,0)};
{(19,0) \ar@<-0.5em> (10,0)};
\endxy
\] which when restricting the first two categories to cofibrant objects becomes 
\[
\xy
(-28,0)*+{K_{ac}(\class{PP})};
(0,0)*+{K(\class{PP})};
(25,0)*+{\class{D}(R)};
{(-19,0) \ar (-10,0)};
{(-10,0) \ar@<0.5em> (-19,0)};
{(-10,0) \ar@<-0.5em> (-19,0)};
{(10,0) \ar (19,0)};
{(19,0) \ar@<0.5em> (10,0)};
{(19,0) \ar@<-0.5em> (10,0)};
\endxy
.\]  
In particular, the Verdier quotient $K(R)/\class{V} \cong  K_{ac}(\class{PP})$ must be compactly generated. 
\end{theorem}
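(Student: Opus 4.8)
The plan is to assemble the three abelian model structures produced in Corollaries~\ref{cor-pure-proj-model}, \ref{cor-exPP-model} and~\ref{cor-DG-pure-projective-model} into the configuration demanded by the main theorem of~\cite{gillespie-recollement}. All three live on the exact category $\ch_{pur}$, and each is ``projective'' there in the sense that every object is trivially fibrant; so by Hovey's correspondence (Theorem~\ref{them-proj-cot-pair}) each is encoded by a single hereditary complete cotorsion pair cogenerated by a set. Write $\mathcal{M}_{pp}$, $\mathcal{M}_{ex}$, $\mathcal{M}_{dg}$ for these model structures. Their cotorsion pairs and homotopy categories are, respectively, $(\dwclass{PP},\class{A}_{pur})$ with homotopy category $\class{D}_{pur}(R)$; $(\exclass{PP},\class{V})$ with homotopy category $K(R)/\class{V}\cong K_{ac}(\class{PP})$; and $(\dgclass{PP},\class{E})$ with homotopy category $\class{D}(R)$, where $\dgclass{PP}$ denotes the class of DG-pure-projective complexes.

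First I would verify the compatibilities among these cotorsion pairs that~\cite{gillespie-recollement} requires. The crucial one is the identity $\dwclass{PP}\cap\class{E}=\exclass{PP}$, which holds by definition, since a complex is acyclic with pure-projective components exactly when it is both degreewise pure-projective and acyclic. The remaining checks are routine: $\dgclass{PP}\subseteq\dwclass{PP}$; taking right orthogonals in $\ch_{pur}$ of $\exclass{PP}\subseteq\dwclass{PP}$ gives $\class{A}_{pur}\subseteq\class{V}$; each of the ``trivially cofibrant'' classes $\dwclass{PP}\cap\class{A}_{pur}$, $\exclass{PP}\cap\class{V}$ and $\dgclass{PP}\cap\class{E}$ equals the class of projective objects of $\ch_{pur}$, i.e. the contractible complexes with pure-projective components; and all three cotorsion pairs are hereditary. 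Granting these, the recollement theorem of~\cite{gillespie-recollement} produces the recollement
\[
K(R)/\class{V}\longrightarrow\class{D}_{pur}(R)\longrightarrow\class{D}(R),
\]
in which $\class{D}_{pur}(R)\to\class{D}(R)$ is the canonical localization functor and $K(R)/\class{V}$ is its kernel.

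To pass to the second diagram I would restrict to cofibrant objects. Because $\mathcal{M}_{pp}$ and $\mathcal{M}_{ex}$ are projective model structures on $\ch_{pur}$ --- every object is fibrant --- the homotopy category of each is equivalent to the full subcategory of its cofibrant objects with morphisms taken modulo homotopy, and on complexes of pure-projective modules this homotopy relation reduces to ordinary chain homotopy (which one checks directly from the cotorsion pairs). The cofibrant objects of $\mathcal{M}_{pp}$ are all complexes of pure-projectives, and those of $\mathcal{M}_{ex}$ are the acyclic ones; so restricting the first two corners of the recollement to cofibrant objects turns it into
\[
K_{ac}(\class{PP})\longrightarrow K(\class{PP})\longrightarrow\class{D}(R),
\]
as asserted.

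Finally I would upgrade the generation of $K(R)/\class{V}$. The derived category $\class{D}(R)$ is classically compactly generated, and $\class{D}_{pur}(R)$ is compactly generated because $\mathcal{M}_{pp}$ is a finitely generated model structure, with compact generators the stalk complexes $S^n(M_i)$ on finitely presented modules $M_i$ (Corollary~\ref{cor-pure-proj-model}); by contrast $\mathcal{M}_{ex}$ is only known to be cofibrantly generated (Theorem~\ref{theorem-acyclic complexes of pure-projectives}), so a priori $K(R)/\class{V}\cong K_{ac}(\class{PP})$ is merely well generated. The recollement identifies $K(R)/\class{V}$ with the localizing subcategory $\ker\!\bigl(\class{D}_{pur}(R)\to\class{D}(R)\bigr)$ of the compactly generated category $\class{D}_{pur}(R)$, and exhibiting a generating set for this kernel consisting of objects that are compact in $\class{D}_{pur}(R)$ --- e.g. the bounded complexes obtained by totalizing short exact sequences of finitely presented modules --- forces $K(R)/\class{V}$ to be compactly generated, by the standard localization theorem for compactly generated triangulated categories. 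This is the step where the genuine work lies, namely confirming that the compact-generation (rather than merely the well-generation) hypothesis is met; everything else is bookkeeping with cotorsion pairs already built in the earlier sections.
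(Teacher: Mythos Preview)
Your recollement argument is essentially the paper's: assemble the three projective cotorsion pairs $(\dwclass{PP},\class{A}_{pur})$, $(\exclass{PP},\class{V})$, $(\class{C},\class{E})$ on $\ch_{pur}$, check the nesting $\dwclass{PP}\cap\class{E}=\exclass{PP}$, and feed this into~\cite{gillespie-recollement} (or~\cite{gillespie-recoll2}). The restriction to cofibrant objects is also handled the same way.

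The substantive difference is in the compact generation step, and here your proposal has a gap. You propose to exhibit a set of compact objects of $\class{D}_{pur}(R)$ lying in the kernel and generating it, suggesting ``bounded complexes obtained by totalizing short exact sequences of finitely presented modules'' as candidates. These are indeed compact in $\class{D}_{pur}(R)$ and acyclic, but you give no argument that they \emph{generate} $K_{ac}(\class{PP})$, and this is far from clear: an arbitrary acyclic complex of pure-projectives need not be detected by maps from three-term exact sequences of finitely presented modules. Invoking the Neeman--Thomason localization theorem does not help unless you already know the kernel is generated by compacts, which is exactly the point at issue.

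The paper avoids this difficulty entirely. From the recollement, the inclusion $I\colon K_{ac}(\class{PP})\hookrightarrow K(\class{PP})$ has both a left adjoint $\lambda$ and a right adjoint. Because $I$ has a right adjoint, $I$ preserves coproducts, and hence $\lambda$ preserves compact objects. Applying $\lambda$ to the compact generators $\{S^n(M_i)\}$ of $K(\class{PP})$ therefore yields compact objects of $K_{ac}(\class{PP})$, and these form a generating set because $\lambda$ is a localization onto a coreflective subcategory. This argument is short, uses only the formal structure of the recollement, and requires no guess about explicit generators. (The paper's Remark after the theorem then describes $\lambda(S^n(M_i))$ concretely via a special $\class{E}$-preenvelope, but this is a bonus, not part of the proof.)
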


Although the current impetus for this work has been the recent papers~\cite{emmanouil-K-flatness-and-orthogonality-in-homotopy-cats} and~\cite{gillespie-K-flat}, we note that the pure derived category has been studied from various perspectives by many authors; in particular we are aware of~\cite{hovey-christensen-relative hom alg, krause-approximations and adjoints, gillespie-G-derived, stovicek-purity, emmanouil-pure-acyclic-complexes}.  Our work also relates to seminal work on the homotopy category of projective modules, first studied in~\cite{jorgensen-homotopy-projectives, neeman-flat}, and from the model category point of view in~\cite{bravo-gillespie-hovey, becker}.
We note too that Holm and J\o rgensen were able to prove the compact generation, back in~\cite{holm-jorgensen-compactly-generated}, of several interesting homotopy categories of chain complexes. In fact they showed in~\cite[\S6 Item~(3)]{holm-jorgensen-compactly-generated} that $K(\class{PP})$ is compactly generated whenever the ring $R$ has finite (left) pure global dimension.

\section{Preliminaries}\label{sec-preliminaries}
Throughout the paper, $R$ denotes a ring with identity, $R$-Mod the category of left $R$-modules, and $\ch$ the category of chain complexes of left $R$-modules. We denote the opposite ring of $R$ by $R^\circ$, and so may write $R^\circ$-Mod to denote the category of right $R$-modules, and similar for $\chop$.
Our convention is that the differentials of our chain complexes lower degree, so $\cdots
\xrightarrow{} X_{n+1} \xrightarrow{d_{n+1}} X_{n} \xrightarrow{d_n}
X_{n-1} \xrightarrow{} \cdots$ is a chain complex.
The \emph{$n^{\text{th}}$
cycle module} is the module $Z_{n}X := \ker{d_{n}}$ and the
\emph{$n^{\text{th}}$ boundary module} is the module $B_{n}X := \im{d_{n+1}}$. The \emph{$n^{\text{th}}$ homology module} is defined to be
$H_{n}X := Z_{n}X/B_{n}X$. A complex $E$ is said to \emph{acyclic}, or \emph{exact} (we use these terms interchangably), if $H_nE = 0$ for each $n$.

Given a chain complex $X$, the
\emph{$n^{\text{th}}$ suspension of $X$}, denoted $\Sigma^n X$, is the complex given by
$(\Sigma^n X)_{k} = X_{k-n}$ and $(d_{\Sigma^n X})_{k} = (-1)^nd_{k-n}$.
For a given $R$-module $M$, we denote the \emph{$n$-disk on $M$} by $D^n(M)$. This is the complex consisting only of $M \xrightarrow{1_M} M$ concentrated in degrees $n$ and $n-1$, and 0 elsewhere. We denote the \emph{$n$-sphere on $M$} by $S^n(M)$, and this is the complex consisting only of $M$ in degree $n$ and 0 elsewhere. 

The chain homotopy category of $R$ is denoted $K(R)$. Recall that its objects are also chain complexes but its morphisms are chain homotopy classes of chain maps.

Given two chain complexes $X, Y \in \ch$, the total  $\Hom$ chain complex will be denoted by $\homcomplex(X,Y)$. It is the complex of abelian groups $$\cdots \xrightarrow{} \prod_{k \in
\Z} \Hom_R(X_{k},Y_{k+n}) \xrightarrow{\delta_{n}} \prod_{k \in \Z}
\Hom_R(X_{k},Y_{k+n-1}) \xrightarrow{} \cdots,$$ where $(\delta_{n}f)_{k}
= d_{k+n}f_{k} - (-1)^n f_{k-1}d_{k}$.
Its homology satisfies $H_n[Hom(X,Y)] = K(R)(X,\Sigma^{-n} Y)$.

We recall too the usual tensor product of chain complexes. Given $X \in \chop$ and $Y \in \ch$, their tensor product $X
\otimes_R Y$ is defined by $$(X \otimes_R Y)_n = \bigoplus_{i+j=n} (X_i
\otimes_R Y_j)$$ in degree $n$. The boundary map $\delta_n$ is defined
on the generators by the formula $\delta_n (x \otimes y) = dx \otimes y +
(-1)^{|x|} x \otimes dy$, where $|x|$ is the degree of the element
$x$.

Since this paper is considering chain complexes of modules over a ring, the reader may find the books~\cite{garcia-rozas, christensen-foxby-holm-book} to be helpful references. 

\subsection{Cotorsion pairs and abelian model structures}\label{subsection-cot-model}
Besides chain complexes, this paper heavily uses standard facts about \emph{cotorsion pairs} and \emph{abelian model categories}. Standard references for cotorsion pairs include~\cite{enochs-jenda-book} and~\cite{trlifaj-book} and the connection to abelian model categories can be found in~\cite{hovey},~\cite{gillespie-exact model structures}, and~\cite{gillespie-hereditary-abelian-models}. Basic language associated to \emph{exact categories}, in the sense of Quillen, from \cite{quillen-algebraic K-theory}, \cite{keller-exact-cats}, and~\cite{buhler-exact categories} will also be used. It may help the reader to know that, because of~\cite[Appendix~B]{gillespie-G-derived}, an exact structure (in the sense of Quillen) on an abelian category is the same thing as a proper class of short exact sequences (in the sense of Mac\,Lane). So on abelian categories, such as $\ch$, there is no difference between an exact model structure in the sense of~\cite{gillespie-exact model structures} and an abelian model structure compatible with a proper class of short exact sequences as in~\cite{hovey}. 

\subsection{Pure exact structures}\label{subsec-pure exact structures} 
Here we recall some language and notation that was used in~\cite{gillespie-K-flat}. We will be considering the category $R$-Mod of (left) $R$-modules along with the class $\class{P}ur$ of pure exact sequences. This gives us an exact structure $(R\textnormal{-Mod}, \class{P}ur)$ which we will denote by $R\textnormal{-Mod}_{pur}$. We let $\class{A}$ denote the class of all $R$-modules and $\class{PI}$ denote the class of all pure-injective $R$-modules. We have a complete cotorsion pair $(\class{A},\class{PI})$ in $R\textnormal{-Mod}_{pur}$. On the other hand, we let $\class{PP}$ denote the class of all pure-projective $R$-modules. These are characterized as the modules that are direct summands of direct sums of finitely presented modules. $(\class{PP},\class{A})$ is also a complete cotorsion pair in $R\textnormal{-Mod}_{pur}$.

The exact structure $R\textnormal{-Mod}_{pur}$ lifts to an exact structure on $\ch$, the category of chain complexes. We let $\ch_{pur}$ denote this exact category whose short exact sequences are pure exact sequences of $R$-modules in each degree. We will denote the associated Yoneda Ext group of all (equivalence classes of) degreewise pure short exact sequences by $\Ext^1_{pur}(X,Y)$. 

 The following proposition tells us that we can use $\Ext^1_{pur}$ to construct complete cotorsion pairs in $\ch_{pur}$ by the usual method of cogenerating by a set. To state it, let $\{M_i\}$ be a set of representatives of all the isomorphism classes of finitely presented $R$-modules. This is a generating set for the exact category $R\textnormal{-Mod}_{pur}$. By taking all their $n$-disks ($n \in \Z$), this lifts to a set $\{D^n(M_i)\}$, which is a set of generators for the exact category $\ch_{pur}$.

\begin{proposition}\label{prop-cogeneration-by-set}
Let $\class{S}$ be any set (not a proper class) of chain complexes. Then $(\leftperp{(\rightperp{\class{S}})}, \rightperp{\class{S}})$ is a functorially complete cotorsion pair with respect to the degreewise pure exact structure $\ch_{pur}$. Moreover, the class $\leftperp{(\rightperp{\class{S}})}$ consists precisely of direct summands (retracts) of transfinite degreewise pure extensions of complexes in $\class{S}\cup \{D^n(M_i)\}$. 
\end{proposition}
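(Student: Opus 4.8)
The plan is to reduce the statement to a known "small object argument" / Eklof–Trlifaj-type theorem for exact categories, applied to the exact category $\ch_{pur}$. First I would verify that $\ch_{pur}$ is an efficient exact category in the sense needed for such arguments: it is a Grothendieck-type exact category with enough projectives relative to the pure structure (the complexes $D^n(M_i)$, together with split pieces, generate it), it has exact coproducts, and the set $\{D^n(M_i)\}$ is a set of generators, as remarked just before the proposition. The key homological input is that every degreewise-pure short exact sequence of complexes is a transfinite composition issue — more precisely that the admissible epimorphisms in $\ch_{pur}$ are exactly the degreewise pure epimorphisms, and that these are closed under transfinite composition and pushout, which is where the degreewise nature of purity is used.

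Next I would invoke the exact-category version of the Eklof–Trlifaj theorem (as found in, e.g., \v{S}\v{t}ov\'{\i}\v{c}ek's work on deconstructibility, or the version stated for exact categories in the references the paper cites): if $\class{S}$ is a set of objects in an efficient exact category $\class{E}$ with enough generators $\class{G}$, then the cotorsion pair $(\leftperp{(\rightperp{\class{S}})}, \rightperp{\class{S}})$ cogenerated by $\class{S}$ is complete, in fact functorially complete, and the left class $\leftperp{(\rightperp{\class{S}})}$ equals the class of retracts of transfinite extensions (relative to the exact structure) of objects in $\class{S} \cup \class{G}$. Here one must be slightly careful that "extension" means degreewise-pure extension, i.e. the admissible short exact sequences of $\ch_{pur}$, so that the transfinite compositions stay inside the exact structure; this is automatic since $\ch_{pur}$'s admissible exact sequences are precisely the degreewise pure ones. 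Applying this with $\class{E} = \ch_{pur}$, $\class{G} = \{D^n(M_i)\}$, and the given set $\class{S}$ yields exactly the assertion.

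The main obstacle I anticipate is purely bookkeeping rather than conceptual: confirming that all the hypotheses of the abstract completeness theorem genuinely hold for $\ch_{pur}$ — in particular, that $\ch_{pur}$ has the requisite (co)completeness and that filtered colimits of degreewise pure monomorphisms are again degreewise pure monomorphisms with degreewise pure cokernel. This last point reduces to the corresponding fact for $R\textnormal{-Mod}_{pur}$ degree by degree, namely that a transfinite composition of pure monomorphisms of modules is a pure monomorphism and that pure monomorphisms are preserved under the relevant colimits; this is standard. Once that is in place, the description of $\leftperp{(\rightperp{\class{S}})}$ as retracts of transfinite degreewise pure extensions of $\class{S} \cup \{D^n(M_i)\}$ follows formally, and functoriality of the completion comes from the functorial version of the small object argument. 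An alternative, more self-contained route would be to embed $\ch_{pur}$ into an honest module category via a functor carrying pure exact sequences to exact sequences (a "purity functor"), transport $\class{S}$ and the generators across, apply the ordinary Eklof–Trlifaj theorem there, and pull back; but the exact-category formulation above is cleaner and is the approach I would write up.
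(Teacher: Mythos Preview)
Your proposal is correct and matches the paper's approach: the paper's proof is simply a citation to \cite[Prop.~5.4 and Remark~2]{gillespie-G-derived}, which is exactly the Eklof--Trlifaj / small object argument for the exact category $\ch_{pur}$ (there denoted $\cha{G}_G$ with $\{G_i\}=\{M_i\}$) that you outline. Your sketch essentially unpacks what that cited result asserts and why its hypotheses are met.
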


\begin{proof}
This is a special instance of~\cite[Prop.~5.4]{gillespie-G-derived} and the ``Remark 2'' that follows it. (However, the quoted proposition's statement and proof are sloppy there. First, Prop.~5.4~(2) of op.~cit. should say $\class{S}\cup\{G_i\}$, which translates for us here to be $\class{S}\cup \{D^n(M_i)\}$. Second, the functors denoted by $\textnormal{G-}\Ext^1_{\cha{G}}$ in the first paragraph of that proof should really be denoted 
$\textnormal{G-}\Ext^1_{\class{G}}$, as in the paragraph before Lemma~4.2 of op.~cit.) The category $\cha{G}_G$ in Remark 2 of op.~cit. is discussed in Section~4.1 of op.~cit. and yields $\ch_{pur}$ when the set of generators $\{G_i\}$ is a representative set of all finitely presented $R$-modules, which we have done above by taking $\{G_i\}= \{M_i\}$.  
\end{proof}


\section{Complexes of pure-projectives}\label{sec-pp}

Recall that we are letting $\class{PP}$ denote the class of all pure-projective $R$-modules. 
Throughout the paper, and especially throughout this section, we let $\dwclass{PP}$ denote the class of all complexes that are \emph{degreewise} pure-projective. That is, $P \in \dwclass{PP}$ just means $P$ is a chain complex of pure-projective modules. We let $\class{A}_{pur}$ denote the class of all pure acyclic complexes, as in~\cite[Def.~4.1]{gillespie-K-flat}. Using techniques from~\cite[Appendix~A]{bravo-gillespie-hovey}, we will give a direct proof that $(\dwclass{PP}, \class{A}_{pur})$ is a complete cotorsion pair in the exact category $\ch_{pur}$.

\begin{lemma}\label{lemma-bounded above complexes of pp}
Let $F$ be a bounded above complex of finitely presented modules. Then any chain map $f : F \xrightarrow{} Y$ with $Y \in \class{A}_{pur}$ is null homotopic. In other words, $\homcomplex(F,Y)$ is acyclic, or equivalently, $\Ext^1_{pur}(F,Y) = 0$ for all $Y \in \class{A}_{pur}$.
\end{lemma}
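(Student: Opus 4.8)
The plan is to prove the statement by induction on the length of the bounded-above complex $F$, reducing everything to the single fact that a pure acyclic complex $Y$ is, by definition, an exact complex whose cycle modules $Z_nY$ are the ``pure'' kernels — i.e. each short exact sequence $0 \to Z_{n+1}Y \to Y_{n+1} \to Z_nY \to 0$ is pure exact in $R$-Mod. Since $F$ is bounded above, say $F_k = 0$ for $k > N$, I would build a contracting homotopy $s_k : F_k \to Y_{k+1}$ downward, starting from degree $N$ where there is nothing to fix, and at each stage solving a lifting/extension problem against a pure exact sequence. The key point that makes each stage solvable is that each $F_k$ is \emph{finitely presented}, hence pure-projective, so it is projective relative to pure epimorphisms; this is exactly the leverage that the boundedness provides, because it lets the induction get started without any compatibility obstruction at the top.

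Concretely, first I would set up the diagram. Given a chain map $f : F \to Y$, I want maps $s_k : F_k \to Y_{k+1}$ with $f_k = d^Y_{k+1} s_k + s_{k-1} d^F_k$. Proceeding by descending induction, suppose $s_j$ has been constructed for all $j > k$ (with $s_j = 0$ for $j \geq N$). Consider the map $g_k := f_k - s_{k-1}$... wait, better: consider $h_k := f_k - d^Y_{k+1} s_k$-to-be; the standard move is to check that $f_k - s_{k-1}d^F_k$ — no — I would check that the map $F_k \to Y_k$ given by $f_k - s_{k-1}d^F_k$ actually lands in $Z_kY$ (a short computation using $d^Y f = f d^F$, the induction hypothesis, and $d^Y d^Y = 0$), and then lift it along the pure epimorphism $Y_{k+1} \twoheadrightarrow Z_kY$ using pure-projectivity of the finitely presented module $F_k$. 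This produces $s_k$ and closes the induction. Because $F$ is bounded above, the induction terminates at the top and no issues of convergence or of a starting ``seed'' homotopy arise.

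The three reformulations in the statement are then immediate: ``$f$ is null homotopic'' for every $f : F \to Y$ says precisely that $K(R)(F, \Sigma^{-n}Y) = 0$ for all $n$ — but one must be slightly careful here, since we want this for $\homcomplex(F,Y)$ with $F,Y$ genuine complexes, and the suspensions $\Sigma^{-n}Y$ of a pure acyclic complex are again pure acyclic, so the null-homotopy statement applies to each and gives $H_n[\homcomplex(F,Y)] = K(R)(F,\Sigma^{-n}Y) = 0$, i.e. $\homcomplex(F,Y)$ is acyclic. Finally, $\Ext^1_{pur}(F,Y) = 0$ follows because a bounded-above complex of finitely presented (hence pure-projective) modules is a complex of projectives for the exact structure $\ch_{pur}$ in the relevant degreewise sense — more precisely, $\Ext^1_{pur}(F,Y)$ is computed by degreewise-pure extensions, and the same contracting-homotopy argument (or the standard identification of $\Ext^1$ in $\homcomplex$ via $H_{-1}$) shows any such extension of $Y$ by $F$ splits. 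I would phrase this last equivalence by invoking the standard dictionary between null-homotopy of all maps $F \to Y$, acyclicity of $\homcomplex(F,Y)$, and vanishing of $\Ext^1_{pur}(F,Y)$, the latter using that degreewise-pure short exact sequences ending in a bounded-above complex of finitely presented modules split degreewise and then patch up by the same downward induction.

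The main obstacle — really the only subtle point — is verifying that at each inductive step the map $f_k - s_{k-1}d^F_k : F_k \to Y_k$ indeed factors through $Z_kY \hookrightarrow Y_k$, so that we genuinely have a lifting problem against the \emph{pure} epimorphism $Y_{k+1} \to Z_kY$ rather than against the bare map $d^Y_{k+1}$. This is the one place where the chain-map identity for $f$, the inductive homotopy identity for $s_{k-1}$, and $d^Yd^Y = 0$ all have to be combined correctly; everything else is formal. It is worth noting that pure-exactness of $Y$ (not merely acyclicity) is essential precisely here: without it, $Y_{k+1} \to Z_kY$ need not be a pure epimorphism and the finitely presented module $F_k$ would have no reason to lift against it.
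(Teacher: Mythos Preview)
Your inductive step has a genuine gap that cannot be repaired by re-indexing. You propose to construct $s_k$ by lifting the map $f_k - s_{k-1}d^F_k : F_k \to Z_kY$ along the pure epimorphism $Y_{k+1} \twoheadrightarrow Z_kY$. But in a \emph{descending} induction---the only direction available when $F$ is bounded \emph{above}---the map $s_{k-1}$ has not yet been constructed when you need it. The verification you sketch, that $f_k - s_{k-1}d^F_k$ lands in $Z_kY$, does go through, but it uses the homotopy identity at degree $k-1$, which in turn requires both $s_{k-1}$ and $s_{k-2}$ to already exist.

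What you have written is exactly the correct argument for an \emph{ascending} induction on a bounded \emph{below} complex of pure-projectives: there one has $s_{k-1}$ and $s_{k-2}$ in hand before building $s_k$, and your lifting step works verbatim. For a bounded-above complex the natural descending step would instead ask for $s_{n-1}$ with $s_{n-1}d^F_n = f_n - d^Y_{n+1}s_n$; this forces $f_n - d^Y_{n+1}s_n$ to vanish on all of $Z_nF$, whereas only vanishing on $B_nF$ follows from the induction hypothesis. The paper handles this obstruction by a genuinely different device: at each stage it \emph{modifies} the already-constructed $s_n$ to a new $\tilde s_n$ (without disturbing the identity at degree $n+1$, since the correction factors through $F_n/B_nF$) while simultaneously producing $s_{n-1}$. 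Both the correction and the new map are extracted from a null-homotopy of an auxiliary chain map out of the three-term bounded complex $F_n/B_nF \to F_{n-1} \to F_{n-1}/B_{n-1}F$, where the easier bounded case applies. This correction term is the missing idea; without it the descending induction stalls.
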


\begin{proof}
Assume we are given any chain map $f :
F\xrightarrow{}Y$ with $Y \in \class{A}_{pur}$.  We will show it is null homotopic by constructing a chain
homotopy $s_{n} : F_{n}\xrightarrow{}Y_{n+1}$ with
$ds_{n}+s_{n-1}d=f_{n}$, by downwards induction on $n$.  Since $F$ is
bounded above, we can take $s_{n}=0$ for large $n$ to begin the
induction.  So we suppose that $s_{i}$ has been defined for $i\geq n$
and that $d_{n+2}s_{n+1}+s_{n}d_{n+1}=f_{n+1}$.

We first modify $s_{n}$ to a new map $\tilde{s}_{n}$ so that
this identity still holds, and also construct a map $s_{n-1}$ such that
$d_{n+1}\tilde{s}_{n}+s_{n-1}d_n=f_{n}$. Note first that
\[
(f_{n}-ds_{n})d = d (f_{n+1}-s_{n}d)=d^{2}s_{n+1}=0,
\]
so there is an induced map $g_{n} :
F_{n}/B_{n}F\xrightarrow{}Y_{n}$, such that the composite $F_n \xrightarrow{\pi_n} F_{n}/B_{n}F\xrightarrow{g_n} Y_{n}$ equals $f_n - d_{n+1}s_n$.  Now consider the bounded complex
$X$ of finitely presented modules $$\cdots \xrightarrow{} 0 \xrightarrow{} F_{n}/B_{n}F \xrightarrow{\bar{d}_n}
 F_{n-1} \xrightarrow{\pi_{n-1}} F_{n-1}/B_{n-1}F \xrightarrow{} 0 \xrightarrow{} \cdots$$ where degree $n$ is $X_{n}=F_{n}/B_{n}F$.  
 There is a chain map $g : X\xrightarrow{}Y$ that
is $g_{n}$ in degree $n$, $f_{n-1}$ in degree $n-1$, and $f_{n-2}\bar{d}_{n-1}$ in
degree $n-2$.  It is easy to show that \emph{any} chain map from a bounded complex of finitely presented modules to a pure acyclic complex must be null homotopic. (See~\cite[Lemma~A.4]{bravo-gillespie-hovey} for a short proof.) In particular, $g$ is null homotopic.  This gives us maps $s_{n}' :
F_{n}/B_{n}F\xrightarrow{}Y_{n+1}$ and $s_{n-1} :
F_{n-1}\xrightarrow{}Y_{n}$ such that
$d_{n+1}s_{n}'+s_{n-1}\bar{d}_n= g_n$.  Composing with $F_n \xrightarrow{\pi_n} F_{n}/B_{n}F$ this becomes
$$d_{n+1}(s_{n}'\pi_n)+s_{n-1}d_n= f_n - d_{n+1}s_n$$
and so then setting $\tilde{s}_{n} = s_{n}+s_{n}'\pi_n$ we see the required equation
$$d_{n+1}(s_{n}+s_{n}'\pi_n)+s_{n-1}d_n =f_{n}.$$
Moreover, with $\tilde{s}_{n}= s_{n}+s_{n}'\pi_n$ substituted for $s_n$, we see that the original identity 
$$
d_{n+2}s_{n+1} + (s_{n}+s'_{n}\pi_n)d_{n+1}=f_{n+1}
$$
still holds because $s'_{n}\pi_nd_{n+1} = 0$.
\end{proof}

By an \textbf{fp-free} module we mean a direct sum of finitely presented modules. Note that pure-projective modules are precisely the direct summands of fp-free modules. 

\begin{lemma}\label{lemma-complexes of pure-projectives are retracts of complexes of fp-frees}
Let $P$ be a complex of pure-projective $R$-modules. Then
$P$ is a direct summand of a chain complex $F$ of fp-free modules. That is, each component $F_n$ is fp-free.  Furthermore, if $P$
is acyclic then $F$ too can be taken to be acyclic.
\end{lemma}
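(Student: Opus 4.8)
The plan is to build $F$ as a direct sum $F = P \oplus E$, where $E$ is a contractible complex of fp-free modules chosen so that, in each degree, $E$ supplies enough ``room'' to absorb the corresponding component of $P$ into an fp-free module by an Eilenberg-swindle argument. The elementary input is the following. Since each $P_n$ is pure-projective, fix an isomorphism $Q_n \cong P_n \oplus C_n$ with $Q_n$ fp-free, and write $Q_n^{(\N)}$ for the direct sum of countably many copies of $Q_n$ (which is again fp-free). Then
\[
P_n \oplus Q_n^{(\N)} \;\cong\; P_n \oplus \bigl(P_n \oplus C_n\bigr)^{(\N)} \;\cong\; P_n^{(\N)} \oplus C_n^{(\N)} \;\cong\; \bigl(P_n \oplus C_n\bigr)^{(\N)} \;=\; Q_n^{(\N)},
\]
so adjoining to $P_n$ a countable direct sum of copies of the ambient fp-free module $Q_n$ turns it into an fp-free module.

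Next I would take $E := \bigoplus_{n \in \Z} D^n\bigl(Q_n^{(\N)}\bigr)$. Each disk $D^n\bigl(Q_n^{(\N)}\bigr)$ is contractible with fp-free components, and in any fixed degree $k$ only the two summands $D^k\bigl(Q_k^{(\N)}\bigr)$ and $D^{k+1}\bigl(Q_{k+1}^{(\N)}\bigr)$ contribute, so $E$ is a contractible (hence acyclic) complex with $E_k = Q_k^{(\N)} \oplus Q_{k+1}^{(\N)}$ fp-free for all $k$. Setting $F := P \oplus E$, the canonical inclusion $P \hookrightarrow F$ and projection $F \twoheadrightarrow P$ are chain maps exhibiting $P$ as a retract of $F$ (and the splitting is automatically degreewise pure, as every split short exact sequence is). Finally $F_k = P_k \oplus Q_k^{(\N)} \oplus Q_{k+1}^{(\N)}$, and applying the swindle of the previous paragraph to the $Q_k^{(\N)}$-summand identifies $F_k$ with $Q_k^{(\N)} \oplus Q_{k+1}^{(\N)}$, which is fp-free. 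Hence $F$ is a complex of fp-free modules of which $P$ is a direct summand.

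For the last assertion of the lemma, observe that $E$ is contractible, so $H_k(F) \cong H_k(P) \oplus H_k(E) = H_k(P)$ for every $k$; thus $F$ is acyclic as soon as $P$ is, with no modification to the construction. The argument is essentially formal, and the only real point is the swindle: one cannot simply take $F_n = Q_n$, since a direct summand of an fp-free module need not itself be fp-free, but after inflating $Q_n$ to $Q_n^{(\N)}$ the summand $P_n$ gets absorbed — and packaging the $Q_n^{(\N)}$ into disk complexes achieves this while keeping $E$, hence the ``error term'' $F/P$, contractible.
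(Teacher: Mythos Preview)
Your proof is correct and is essentially the same as the paper's: both add on the contractible complex $\bigoplus_{n\in\Z} D^n(F_n)$ of disks on fp-free modules $F_n$ chosen via the Eilenberg swindle so that $P_n \oplus F_n \cong F_n$. The only difference is cosmetic --- you spell out the swindle explicitly (taking $F_n = Q_n^{(\N)}$ and verifying $P_n \oplus Q_n^{(\N)} \cong Q_n^{(\N)}$), whereas the paper simply invokes it.
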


\begin{proof}
Recall Eilenberg's swindle (Corollary~2.7 of~\cite{lam}) allows one to
construct, for any projective module $P$ a free module $F$ such that
$P \oplus F \cong F$. The same trick allows one to
construct, for any pure-projective module $P$ an fp-free module $F$ such that
$P \oplus F \cong F$. So given any complex of pure-projectives $P$, we can find for each $P_n$ an fp-free $F_n$ such that $P_n \oplus F_n
\cong F_n$. Then $P \oplus (\bigoplus_{n \in \Z} D^n(F_n))$ is a complex
of fp-free modules. Indeed in degree $n$ the complex equals $P_n \oplus
F_n \oplus F_{n+1} \cong F_n \oplus F_{n+1}$ which is fp-free. Of course
$P$ is a direct summand of $P \oplus (\bigoplus_{n \in \Z} D^n(F_n))$ by
construction and also $P \oplus (\bigoplus_{n \in \Z} D^n(F_n))$ is acyclic
whenever $P$ is acyclic.
\end{proof}

In the following we reach our goal: Using the techniques from~\cite[Appendix~A]{bravo-gillespie-hovey} to show that $(\dwclass{PP}, \class{A}_{pur})$ is a complete cotorsion pair in the exact category $\ch_{pur}$. We show here that it is cogenerated by the class of all bounded above complexes of finitely presented modules, which of course is skeletally small. In fact, it is cogenerated by the set $\{S^n(M_i)\}$ of all spheres on finitely generated modules $M_i$.

\begin{theorem}\label{thm-bounded above complexes of finitely presented cogenerate}
$(\dwclass{PP}, \class{A}_{pur})$ is a complete cotorsion pair in the exact category $\ch_{pur}$. It is
cogenerated by the collection of all bounded above complexes of finitely presented modules. 

In fact, the cotorsion pair is cogenerated by the set $\class{T} = \{S^n(M_i)\}$ where $\{M_i\}$ is any representative set of all the isomorphism classes of finitely presented $R$-modules. It follows that any chain complex of pure-projective modules is a direct summand of a transfinite degreewise split extension of $\class{T} \cup \{D^n(M_i)\}$.
\end{theorem}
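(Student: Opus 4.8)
The plan is to verify the three conditions required to apply Proposition~\ref{prop-cogeneration-by-set} with $\class{S} = \class{T} = \{S^n(M_i)\}$, and then to identify the resulting cotorsion pair with $(\dwclass{PP}, \class{A}_{pur})$. First I would compute $\rightperp{\class{T}}$ with respect to $\Ext^1_{pur}$. For a sphere $S^n(M_i)$ with $M_i$ finitely presented, a standard computation (using that $M_i$ is finitely presented, so $\Ext^1_{pur}(M_i,-)$ is controlled by pure exactness) shows $\Ext^1_{pur}(S^n(M_i), Y) = 0$ for all $n$ and all $i$ if and only if $Y$ is pure acyclic; the point is that $\Ext^1_{pur}(S^n(M_i),Y)$ measures the obstruction to splitting, in each degree and purely, the relevant short exact sequences built from $Y$, and running over all $n$ forces $Y$ to be pure exact. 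Thus $\rightperp{\class{T}} = \class{A}_{pur}$.

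Next I would show $\leftperp{(\rightperp{\class{T}})} = \leftperp{\class{A}_{pur}} = \dwclass{PP}$. The inclusion $\leftperp{\class{A}_{pur}} \subseteq$ (bounded above complexes of finitely presented modules)$^{\perp\perp}$ is automatic once we know (from Lemma~\ref{lemma-bounded above complexes of pp}) that every bounded above complex of finitely presented modules lies in $\leftperp{\class{A}_{pur}}$, hence in particular each $S^n(M_i) \in \leftperp{\class{A}_{pur}}$ (a sphere is a bounded above complex of finitely presented modules). For the reverse containment, I would first check $\dwclass{PP} \subseteq \leftperp{\class{A}_{pur}}$: by Lemma~\ref{lemma-complexes of pure-projectives are retracts of complexes of fp-frees}, any $P \in \dwclass{PP}$ is a retract of a complex $F$ of fp-free modules, and a complex of fp-free modules is a direct sum of complexes each of whose components is a single finitely presented module — more carefully, it is built degreewise from finitely presented modules, so a diagram chase (or the argument of Lemma~\ref{lemma-bounded above complexes of pp} applied degreewise, together with Lemma~A.4 of~\cite{bravo-gillespie-hovey}) shows $\Ext^1_{pur}(F,Y)=0$ for $Y \in \class{A}_{pur}$; retracts inherit this, so $P \in \leftperp{\class{A}_{pur}}$. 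Conversely, if $X \in \leftperp{\class{A}_{pur}}$, then by Proposition~\ref{prop-cogeneration-by-set} applied to $\class{S} = \{\text{bdd. above cxs of f.p. modules}\}$ (skeletally small, giving the same orthogonal by Lemma~\ref{lemma-bounded above complexes of pp}), $X$ is a retract of a transfinite degreewise pure extension of complexes that are either bounded above complexes of finitely presented modules or disks $D^n(M_i)$; since each such complex is degreewise pure-projective and $\dwclass{PP}$ is closed under transfinite degreewise pure extensions (a pure extension of pure-projectives in each degree stays pure-projective because finitely presented modules are pure-projective and $\class{PP}$ is closed under pure extensions and direct sums) and under retracts, we get $X \in \dwclass{PP}$.

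Having established $\leftperp{(\rightperp{\class{T}})} = \dwclass{PP}$ and $\rightperp{\class{T}} = \class{A}_{pur}$, Proposition~\ref{prop-cogeneration-by-set} immediately yields that $(\dwclass{PP}, \class{A}_{pur})$ is a functorially complete cotorsion pair in $\ch_{pur}$ cogenerated by $\class{T}$, and a fortiori cogenerated by the (larger, skeletally small) collection of all bounded above complexes of finitely presented modules. The final sentence — that any chain complex of pure-projectives is a direct summand of a transfinite degreewise \emph{split} extension of $\class{T} \cup \{D^n(M_i)\}$ — follows from the ``Moreover'' clause of Proposition~\ref{prop-cogeneration-by-set}, once one observes that degreewise pure extensions of complexes whose components are finitely presented modules are automatically degreewise \emph{split}: a pure short exact sequence $0 \to A \to B \to M_i \to 0$ with $M_i$ finitely presented splits, since finitely presented modules are pure-projective.

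I expect the main obstacle to be the precise computation $\rightperp{\class{T}} = \class{A}_{pur}$, i.e.\ relating the vanishing of the degreewise-pure $\Ext^1_{pur}(S^n(M_i), Y)$ for all $n, i$ to pure acyclicity of $Y$. The subtlety is that $\Ext^1_{pur}$ is computed in the exact category $\ch_{pur}$, so one must carefully unwind what a degreewise pure extension of a sphere by $Y$ looks like and argue that the collection of all such extensions vanishing is equivalent to $Y$ being an exact complex with cycle modules that are pure submodules in each degree — in other words pure acyclic in the sense of~\cite[Def.~4.1]{gillespie-K-flat}. A clean way to handle this is to pass through the degreewise statement: $\Ext^1_{pur}(S^n(M_i), Y) \cong \Ext^1_{R\text{-}\mathrm{Mod}_{pur}}(M_i, Z_{n}Y) \oplus (\text{terms involving } Y_n/Z_nY)$, and then use that $M_i$ finitely presented makes $\Ext^1_{R\text{-}\mathrm{Mod}_{pur}}(M_i, -)$ detect exactly the failure of pure-exactness; running over all finitely presented $M_i$ and all $n$ pins down pure acyclicity. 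The rest of the argument is bookkeeping with standard closure properties of cotorsion pairs.
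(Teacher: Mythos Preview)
Your overall structure is sound, and the final paragraph on degreewise split extensions is correct. But you have misidentified where the difficulty lies, and the genuine obstacle is glossed over.

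The computation $\rightperp{\class{T}} = \class{A}_{pur}$ is not the hard step. Since each $S^n(M_i)$ has pure-projective components, $\Ext^1_{pur}(S^n(M_i),Y) = \Ext^1_{dw}(S^n(M_i),Y)$, and there is a standard isomorphism $\Ext^1_{dw}(S^n(M),Y) \cong H_{n-1}\Hom_R(M,Y)$. Thus $Y \in \rightperp{\class{T}}$ iff $\Hom_R(M_i,Y)$ is acyclic for every finitely presented $M_i$, which is one of the standard characterizations of pure acyclicity. Your proposed decomposition involving $Z_nY$ is neither correct nor needed.

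The real gap is in your argument for $\dwclass{PP} \subseteq \leftperp{\class{A}_{pur}}$. You assert that a complex $F$ of fp-free modules ``is a direct sum of complexes each of whose components is a single finitely presented module''. This is false: the differential $d_n : \bigoplus_{i\in I_n} M_i \to \bigoplus_{j\in I_{n-1}} M_j$ mixes summands, so $F$ does not split as such a direct sum of subcomplexes. Your fallback to ``Lemma~\ref{lemma-bounded above complexes of pp} applied degreewise'' does not work either, because that lemma requires the source complex to be bounded above; for an unbounded $F$ there is no place to start the downward induction. This is precisely the point the paper's proof addresses. One must exhibit $F$ as a transfinite degreewise split extension of bounded above complexes of finitely presented modules: pick a single summand $M_j \subseteq F_n$, chase $d(M_j)$ into finitely many summands of $F_{n-1}$ (possible since $M_j$ is finitely presented), iterate downward to produce a bounded above subcomplex $X^0 \subseteq F$ of finitely presented modules with $F/X^0$ again a complex of fp-free modules, and then repeat transfinitely. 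Without this filtration (or a citation to the equivalent results of \v{S}\v{t}ov\'{\i}\v{c}ek or Emmanouil that every map from a complex of pure-projectives to a pure acyclic complex is null homotopic), the inclusion $\dwclass{PP} \subseteq \leftperp{\class{A}_{pur}}$ is unproved.
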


\begin{proof}
 Let $S$ be a set of representatives for all the isomorphism classes of bounded above complexes of finitely presented modules. We let $\class{T}$ be as described, and note that $\class{T} \cup \{D^n(M_i)\} \subseteq \class{S}$.  By Propostion~\ref{prop-cogeneration-by-set}, the sets $\class{S}$ and $\class{T}$ each cogenerate functorially complete cotorsion pairs $(\leftperp{(\rightperp{\class{S}})}, \rightperp{\class{S}})$ and $(\leftperp{(\rightperp{\class{T}})}, \rightperp{\class{T}})$, in the exact category $\ch_{pur}$. Moreover, the class $\leftperp{(\rightperp{\class{S}})}$ (resp. $\leftperp{(\rightperp{\class{T}})}$) consists precisely of direct summands of transfinite degreewise pure extensions of complexes in $\class{S}$ (resp.  $\class{T} \cup \{D^n(M_i)\}$). We will show that $$(\leftperp{(\rightperp{\class{S}})}, \rightperp{\class{S}}) = (\leftperp{(\rightperp{\class{T}})}, \rightperp{\class{T}}) = (\dwclass{PP}, \class{A}_{pur}).$$

First we will show $\rightperp{\class{S}} = \rightperp{\class{T}} = \class{A}_{pur}$. We start by noting $\rightperp{\class{T}} = \class{A}_{pur}$. Indeed $X \in \rightperp{\class{T}}$ if and only if for all integers $n$, and all the finitely presented modules $M_i$, we have $$0 = \Ext^1_{pur}(S^n(M_i),X) = \Ext^1_{dw}(S^n(M_i),X) \cong  H_{n-1}\Hom_R(M_i,X).$$ This means $X \in \rightperp{\class{T}}$ if and only if each $\Hom_R(M_i,X)$ is acyclic which means that $X$ is a pure acyclic. So $\rightperp{\class{T}} = \class{A}_{pur}$. Next, since $\class{T} \subseteq \class{S}$, we get $\rightperp{\class{S}} \subseteq \rightperp{\class{T}}$. So $\rightperp{\class{S}} \subseteq \class{A}_{pur}$. On the other hand, we have $\class{A}_{pur}\subseteq \rightperp{\class{S}}$, by Lemma~\ref{lemma-bounded above complexes of pp}. So we have shown $\rightperp{\class{S}} = \class{A}_{pur}$ too.

It follows that $(\leftperp{(\rightperp{\class{S}})}, \rightperp{\class{S}}) = (\leftperp{(\rightperp{\class{T}})}, \rightperp{\class{T}}) =  (\leftperp{\class{A}_{pur}}, \class{A}_{pur})$. To complete the proof, it is enough to show $\leftperp{(\rightperp{\class{S}})} = \dwclass{PP}$. The containment $\leftperp{(\rightperp{\class{S}})} \subseteq \dwclass{PP}$ follows from the containment $\class{S} \subseteq \dwclass{PP}$ and the fact that $\dwclass{PP}$ is closed under direct summands and transfinite degreewise pure extensions. One way to see this is to observe that since $\class{PP}$ is the left hand side of a cotorsion pair in $R\textnormal{-Mod}_{pur}$, the class $\class{PP}$ must be closed under direct summands and under transfinite pure extensions. 

To show $\dwclass{PP} \subseteq \leftperp{(\rightperp{\class{S}})}$, we will show that every $P \in \dwclass{PP}$ is a direct summand of a transfinite degreewise pure extension of complexes in $\class{S}$. By Lemma~\ref{lemma-complexes of pure-projectives are retracts of complexes of fp-frees} we only need to show that any complex $F$ of fp-free modules (again this means that each $F_n$ is a direct sum of finitely presented modules) is a transfinite degreewise pure extension of bounded above complexes of finitely presented modules. But this can be done by imitating the argument in~\cite[Theorem~A.3]{bravo-gillespie-hovey}. Let us give the argument for the readers convenience. 

 So let $F$ be a complex of fp-free
modules and write each $F_n = \bigoplus_{i \in I_n} M_i$ for some collection $\{M_i\}_{i\in I_n}$
of finitely presented modules.  Assuming $F$ is nonzero we can find
a nonzero $F_n$ and we take one nonzero summand $M_j$ for some $j \in
I_n$. We start to build a bounded above subcomplex $X \subseteq F$ by
setting $X_n = M_j$ and setting $X_i = 0$ for all $i > n$. Now note
$d(M_j) \subseteq \bigoplus_{i \in I_{n-1}} M_i$ and that we can find a \emph{finite} 
subset $L^0_{n-1}\subseteq I_{n-1}$ such that $d(M_j) \subseteq \bigoplus_{i \in L^0_{n-1}}
M_i$. We set $X_{n-1} = \bigoplus_{i \in L^0_{n-1}} M_i$.  We can continue down in the same way
finding $L^0_{n-2} \subseteq I_{n-2}$ with $|L^0_{n-2}|$ finite and with
$d(\bigoplus_{i \in L^0_{n-1}} M_i) \subseteq \bigoplus_{i \in L^0_{n-2}}
M_i$. In this way we continue to get a subcomplex of $X$: 
\[
X^0 = \cdots
\xrightarrow{} 0 \xrightarrow{} M_j \xrightarrow{} \bigoplus_{i \in
L^0_{n-1}} M_i \xrightarrow{} \bigoplus_{i \in L^0_{n-2}} M_i \xrightarrow{}
\cdots 
\]
Since finitely presented modules are closed under finite direct sums, the complex $X^0$ is a nonzero bounded above complex of finitely
presented modules.

Note that $F/X^0$ is fg-free in each degree, and assuming it is nonzero we can in turn find
another nonzero subcomplex $X^1/X^0 \subseteq F/X^0$ with $X^1/X^0$ a bounded above complex of finitely presented modules
and with its quotient
$$(F/X^0)/(X^1/X^0) \cong F/X^1$$ a complex of fp-free modules. Note that we can
identify these quotients, in particular $F/X^0$ and $F/X^1$, with complexes whose degree $n$
entry is $\bigoplus_{i \in I_n-L^0_n} M_i$ (resp. $\bigoplus_{i \in I_n-L^1_n} M_i$) and in doing so we may
continue to find an increasing union $0 \neq X^0 \subseteq X^1
\subseteq X^2 \subseteq \cdots $ of degreewise split extensions corresponding to a nested union of
subsets $L^0_n \subseteq L^1_n \subseteq L^2_n \subseteq \cdots$ for
each $n$. Assuming this process doesn't terminate at a finite step, we set $X^{\omega} =
\cup_{\alpha < \omega} X^{\alpha}$ and note that $X^{\omega}_n =
\bigoplus_{i \in L^{\omega}_n} M_i$ where $L^{\omega}_n =
\cup_{\alpha < \omega} L^{\alpha}_n$. So still, $X^{\omega}$ and
$F/X^{\omega}$ are complexes of fp-free modules. Therefore we
can continue this process with $F/X^{\omega}$ to obtain $X^{\omega
+1}$ with all the properties we desire. Using this process we can
obtain an ordinal $\lambda$ and a continuous union $F = \cup_{\alpha <
\lambda} X^{\alpha}$ of degreewise split extensions with each $X_{\alpha}, X_{\alpha + 1}/X_{\alpha}$
a complex of fp-frees and with $X_0$ and each $X_{\alpha +1}/X_{\alpha}$ a bounded above complex of finitely presented modules.
Therefore we have shown $\dwclass{PP} \subseteq \leftperp{(\rightperp{\class{S}})}$.

This completes the proof that $(\leftperp{(\rightperp{\class{S}})}, \rightperp{\class{S}}) = (\leftperp{(\rightperp{\class{T}})}, \rightperp{\class{T}}) = (\dwclass{PP}, \class{A}_{pur})$  is a (functorially) complete cotorsion pair in the exact category $\ch_{pur}$. 

It follows from Proposition~\ref{prop-cogeneration-by-set} that any chain complex of pure-projective modules is a direct summand of a transfinite degreewise pure extension of the set $\class{T} \cup \{D^n(M_i)\}$. But all complexes is $\class{T} \cup \{D^n(M_i)\}$ have pure-projective components, so any such extension must actually be a degreewise \emph{split} extension. 
\end{proof}

\begin{corollary}[Stovicek~\cite{stovicek-purity}]\label{cor-pp-contractible}
Any pure acyclic complex of pure-projective $R$-modules is contractible. 
\end{corollary}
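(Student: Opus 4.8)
The goal is to show that any pure acyclic complex $P$ of pure-projective modules is contractible. The plan is to exploit the cotorsion pair $(\dwclass{PP}, \class{A}_{pur})$ just established in Theorem~\ref{thm-bounded above complexes of finitely presented cogenerate}. Since $P$ is a complex of pure-projectives, we have $P \in \dwclass{PP}$; since $P$ is also pure acyclic, we have $P \in \class{A}_{pur}$. Thus $P$ lies in the intersection $\dwclass{PP} \cap \class{A}_{pur}$, which is exactly the class of trivial objects of whatever abelian model structure this cotorsion pair determines — but more elementarily, $P$ being in both classes of a cotorsion pair means $\Ext^1_{pur}(P, P) = 0$.

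The key step is to extract contractibility from the vanishing of $\Ext^1_{pur}(P,P)$. First I would observe that for a pure acyclic complex $P$, each degreewise pure short exact sequence of complexes
\[
0 \xrightarrow{} \Sigma P \xrightarrow{} \widetilde{P} \xrightarrow{} P \xrightarrow{} 0
\]
that one might build is automatically degreewise \emph{split}, since the components of $P$ are pure-projective — so "pure exact in each degree" collapses to "split in each degree" here. The standard move is then this: the identity map $1_P : P \to P$ corresponds, via the connecting-map description of $\Ext$, to an element of $\Ext^1_{pur}(P, \Sigma^{-1} B P)$ or similar; more concretely, one forms the canonical short exact sequence whose middle term is a mapping cone or a pullback construction, and the vanishing of the relevant $\Ext^1_{pur}$ forces this sequence to split, which in turn yields a contracting homotopy. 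A clean way to package this: $P$ is contractible if and only if the degreewise split exact sequence $0 \to Z_nP \to P_n \to Z_{n-1}P \to 0$ assembles so that $P \cong \bigoplus D^n(\text{something})$; and contractibility of $P$ is equivalent to $\Hom_{K(R)}(P, P) \ni 1_P$ being zero, i.e. $H_0 \homcomplex(P,P) = 0$ together with — actually the cleanest route — $1_P$ factoring through a contractible complex.

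The most efficient argument uses the following: for any complex $X$ with $\Ext^1_{pur}(X, X) = 0$ \emph{and} $X$ degreewise pure-projective and pure acyclic, consider that $\Hom_R(M_i, X)$ is acyclic for every finitely presented $M_i$ (this is the definition of pure acyclic), hence $\homcomplex(F, X)$ is acyclic for every $F \in \dwclass{PP}$ by Lemma~\ref{lemma-bounded above complexes of pp} extended via Theorem~\ref{thm-bounded above complexes of finitely presented cogenerate}; in particular $\homcomplex(P,P)$ is acyclic. But an acyclic $\homcomplex(P,P)$ with $P$ degreewise pure-projective is not yet enough — I also need $\Ext^1_{pur}(P,P)=0$, which I have. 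The standard lemma (as in work of Neeman, Stovicek, and \v{S}\v{t}ov\'i\v{c}ek--Bravo--Gillespie--Hovey) states: a complex $P$ with pure-projective (indeed just "nice enough") components is contractible if and only if it is pure acyclic and $\Ext^1_{pur}(P,P)=0$ — the forward direction is trivial, and the reverse is precisely our situation. I would therefore: (1) note $P \in \dwclass{PP} \cap \class{A}_{pur}$; (2) conclude $\Ext^1_{pur}(P,P) = 0$ from the cotorsion pair property; (3) form the degreewise split (since components are pure-projective) short exact sequence $0 \to P \to C \to P[1] \to 0$ realizing a cone-type extension, equivalently use that the canonical exact sequence $0 \to P \xrightarrow{} \text{cone}(1_P) \to \Sigma P \to 0$ is degreewise split and represents an element of $\Ext^1_{pur}(\Sigma P, P) \cong \Ext^1_{pur}(P,P)=0$; (4) deduce this sequence splits in $\ch_{pur}$, i.e. there is a genuine chain map $\Sigma P \to \text{cone}(1_P)$ splitting it, which unwinds to a contracting homotopy on $P$.

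The main obstacle I anticipate is step (3)–(4): being careful that the short exact sequence whose splitting yields a homotopy is exactly the one controlled by $\Ext^1_{pur}(P,P)$, and that "splits as a sequence of complexes in $\ch_{pur}$" genuinely delivers a \emph{chain} homotopy rather than merely a degreewise splitting (which we already have for free). The resolution is the classical fact that a chain-level splitting of $0 \to P \to \text{cone}(1_P) \to \Sigma P \to 0$ is the same data as a nullhomotopy of $1_P$; so once the $\Ext$ class vanishes and the extension is degreewise split (hence pure in each degree, hence represents a class in $\Ext^1_{pur}$), the vanishing gives the chain-level splitting directly. I expect the whole proof to be two or three lines once this dictionary is invoked.
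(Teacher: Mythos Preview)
Your cone-sequence argument in steps (1)--(4) is correct and will work (minor note: the relevant group is $\Ext^1_{pur}(\Sigma P, P)$, not $\Ext^1_{pur}(P,P)$; these are not isomorphic in general, but both vanish since $\Sigma P \in \dwclass{PP}$ and $P \in \class{A}_{pur}$, so no harm done). However, you explicitly dismiss the simpler route that the paper actually takes. You write that ``an acyclic $\homcomplex(P,P)$ \ldots\ is not yet enough'' --- but it \emph{is} enough, and this is the whole proof. Recall $H_0\homcomplex(P,P) = K(R)(P,P)$, so acyclicity of $\homcomplex(P,P)$ says every chain map $P \to P$ is null homotopic; in particular $1_P \sim 0$, so $P$ is contractible. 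No further $\Ext$ input is required.

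The paper's proof is exactly this two-line argument: from the cotorsion pair $(\dwclass{PP},\class{A}_{pur})$ one has $\Ext^1_{pur}(P,\Sigma^n P)=\Ext^1_{dw}(P,\Sigma^n P)=0$ for all $n$, which (via~\cite[Lemma~2.1]{gillespie}) is the same as saying $\homcomplex(P,P)$ is acyclic; hence $1_P$ is null homotopic. Your cone-splitting argument is really just reading off the single degree $H_0\homcomplex(P,P)=0$ through the standard dictionary between null homotopies of $1_P$ and chain-level splittings of $0 \to P \to \mathrm{cone}(1_P) \to \Sigma P \to 0$ --- so you are doing strictly more work to reach the same place.
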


\begin{proof}
$Y \in \class{A}_{pur}$ if and only if for all $P \in \dwclass{PP}$ we have $$\Ext^1_{pur}(P,\Sigma^n Y) =  \Ext^1_{dw}(P,\Sigma^n Y) =  0.$$ By a well-known fact, for example see~\cite[Lemma~2.1]{gillespie}, this is equivalent to the statement that $\homcomplex(P,Y)$ is acyclic for all $P \in \dwclass{PP}$. So now if $X \in \dwclass{PP}\cap\class{A}_{pur}$ then it follows that $\homcomplex(X,X)$ is acyclic. It means all chain maps $X \xrightarrow{} X$ are null homotopic, and so $X$ is contractible. 
\end{proof}

\section{Complexes of pure-projectives and abelian model structures}\label{sec-proj-cot-pairs}

Let $\class{S}$ be any set (not just a proper class) of chain complexes of pure-projective modules, and closed under suspensions. We set $\class{V}_{\class{S}} := \rightperp{\class{S}}$, where this $\Ext^1$-orthogonal  is understood to be taken in the exact category $\ch_{pur}$ of Section~\ref{subsec-pure exact structures}. Finally, set $\class{C}_{\class{S}} := \leftperp{\class{V}_{\class{S}}}$, again in $\ch_{pur}$. The point of this section is to prove the following theorem which will be a tool applied, not just to the cotorsion pair 
$$(\class{C}_{\class{T}}, \class{V}_{\class{T}})  = (\dwclass{PP},\class{A}_{pur})$$ of Theorem~\ref{thm-bounded above complexes of finitely presented cogenerate}, but throughout the paper. 

 \begin{theorem}\label{them-proj-cot-pair} 
$(\class{C}_{\class{S}}, \class{V}_{\class{S}})$   is a projective cotorsion pair in the exact category $\ch_{pur}$. By this we mean that the triple $\mathfrak{M} = (\class{C}_{\class{S}}, \class{V}_{\class{S}}, All)$ is an abelian (equivalently, exact) model structure with respect to the degreewise pure exact structure. 

Moreover, each of the following hold:
\begin{enumerate}
\item $\class{C}_{\class{S}}$ is a suspension closed class of complexes of pure-projectives, and the thick class $\class{V}_{\class{S}}$ is closed under suspensions and contains all contractible complexes. A complex $X$ is in $\class{V}_{\class{S}}$ if and only if $\homcomplex(C,X)$ is acyclic for all $C \in \class{C}_{\class{S}}$ (or just all $C \in \class{S}$). Equivalently, any chain map $C \rightarrow{} X$ is null homotopic whenever $C\in\class{C}_{\class{S}}$ (or just $C \in \class{S}$).
\item If $f : X \xrightarrow{} Y$ is a homotopy equivalence, then $X \in \class{V}_{\class{S}}$ if and only if $Y \in \class{V}_{\class{S}}$. In fact, $\class{V}_{\class{S}}$ is a thick subcategory of the homotopy category $K(R)$. A chain map $f : X \xrightarrow{} Y$  is a weak equivalence in $\mathfrak{M}$ if and only if its cone $C(f) \in \class{V}_{\class{S}}$.
 \item The homotopy category of $\mathfrak{M}$ satisfies $\textnormal{Ho}(\mathfrak{M}) \cong K(R)/\class{V}_{\class{S}}$. That is, $\textnormal{Ho}(\mathfrak{M})$ is equivalent to the Verdier quotient. The model structure shows these categories to be equivalent to $K(\class{C}_{\class{S}})$, where $K(\class{C}_{\class{S}})$ is the full subcategory of $K(R)$ consisting of all complexes (homotopy equivalent to one) in $\class{C}_{\class{S}}$. 
\item $\mathfrak{M}$ is cofibrantly generated and so $\textnormal{Ho}(\mathfrak{M}) =K(R)/\class{V}_{\class{S}}$ is a well-generated triangulated category. 
\item If each complex in $\class{S}$ is a bounded complex of finitely presented modules, then $\mathfrak{M}$ is finitely generated in the sense of~\cite[Section~7.4]{hovey-model-categories} and so $\textnormal{Ho}(\mathfrak{M})$ is a compactly generated triangulated category in this case. 
\item Assume the ring $R$ is commutative. If $\class{C}_{\class{S}}$ contains $S^0(R)$ and is closed under the usual tensor product of chain complexes, then $\mathfrak{M}$ is a monoidal model structure with respect to the tensor product. 
\end{enumerate}
 \end{theorem}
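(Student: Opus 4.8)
The plan is to realize $(\class{C}_{\class{S}}, \class{V}_{\class{S}})$ as a \emph{projective cotorsion pair} on the exact category $\ch_{pur}$ and then read off all six consequences. First I would apply Proposition~\ref{prop-cogeneration-by-set} to the set $\class{S}$: this already gives that $(\class{C}_{\class{S}}, \class{V}_{\class{S}}) = (\leftperp{(\rightperp{\class{S}})}, \rightperp{\class{S}})$ is a functorially complete cotorsion pair in $\ch_{pur}$, and that $\class{C}_{\class{S}}$ is precisely the class of retracts of transfinite degreewise pure extensions of complexes in $\class{S} \cup \{D^n(M_i)\}$. Since every complex in $\class{S} \cup \{D^n(M_i)\}$ has pure-projective components, and $\class{PP}$ is closed under retracts and transfinite pure extensions (being the left half of a cotorsion pair in $R\textnormal{-Mod}_{pur}$), we get $\class{C}_{\class{S}} \subseteq \dwclass{PP}$; moreover such extensions are then automatically degreewise split, and $\class{C}_{\class{S}}$ is suspension closed because $\class{S}$ is. By the exact-category version of Hovey's correspondence (\cite{hovey}, \cite{gillespie-exact model structures}, \cite{gillespie-hereditary-abelian-models}), a complete cotorsion pair $(\class{C}, \class{W})$ in an exact category with enough projectives is a projective cotorsion pair---i.e.\ $(\class{C}, \class{W}, All)$ is an abelian model structure---exactly when $\class{W}$ is thick and $\class{C} \cap \class{W}$ is the class of projective objects. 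So the main assertion reduces to checking these two conditions for $(\class{C}_{\class{S}}, \class{V}_{\class{S}})$, bearing in mind that the projective objects of $\ch_{pur}$ are exactly the contractible complexes of pure-projectives (the retracts of complexes $\bigoplus_n D^n(P_n)$ with each $P_n \in \class{PP}$).

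The key technical input is a $\homcomplex$-description of $\class{V}_{\class{S}}$. Using that $\homcomplex(C, X)$ is acyclic if and only if $\Ext^1_{pur}(C, \Sigma^n X) = 0$ for all $n$ whenever $C$ has pure-projective components (the pure analogue of~\cite[Lemma~2.1]{gillespie}, already used in Corollary~\ref{cor-pp-contractible}), together with the standing hypothesis that $\class{S}$, hence also $\class{C}_{\class{S}}$, is suspension closed, I would prove that for any complex $X$ the following are equivalent: $X \in \class{V}_{\class{S}}$; $\homcomplex(S, X)$ is acyclic for all $S \in \class{S}$; $\homcomplex(C, X)$ is acyclic for all $C \in \class{C}_{\class{S}}$; every chain map from a complex in $\class{C}_{\class{S}}$ (or just in $\class{S}$) to $X$ is null homotopic. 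From this, $\class{V}_{\class{S}}$ is suspension closed and contains all contractible complexes, and it is closed under retracts automatically; and since $\Hom_R(C_n,-)$ is exact on pure short exact sequences when $C_n \in \class{PP}$, applying $\homcomplex(C,-)$ to an admissible short exact sequence of $\ch_{pur}$ yields a short exact sequence of Hom-complexes, whose homology long exact sequence gives the two-out-of-three property---so $\class{V}_{\class{S}}$ is thick (and the cotorsion pair is, incidentally, hereditary). Finally, if $X \in \class{C}_{\class{S}} \cap \class{V}_{\class{S}}$ then $\homcomplex(X,X)$ is acyclic, so $1_X$ is null homotopic and $X$ is a contractible complex with pure-projective components, hence a projective object of $\ch_{pur}$; conversely each $D^n(M)$ with $M \in \class{PP}$ is a retract of a coproduct of the $D^n(M_i)$, hence lies in $\class{C}_{\class{S}}$ (which is closed under coproducts and retracts), and it lies in $\class{V}_{\class{S}}$ since it is contractible---and the same holds for retracts of coproducts of such complexes. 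So $\class{C}_{\class{S}} \cap \class{V}_{\class{S}}$ is exactly the class of projectives, which completes the main assertion; item~(1) and the first sentence of~(2) are part of the discussion just given.

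For the remaining items: in~(2), membership in $\class{V}_{\class{S}}$ is by the above a homotopy-invariant condition on $X$, so $\class{V}_{\class{S}}$ is closed under homotopy equivalence; writing $H_n[\homcomplex(C,X)] = K(R)(C, \Sigma^{-n}X)$ identifies $\class{V}_{\class{S}}$ with the class of $X$ such that $K(R)(C, \Sigma^n X) = 0$ for all $C \in \class{S}$ and all $n \in \Z$, which is a thick (indeed localizing) subcategory of $K(R)$; and since an abelian model structure on $\ch$ is stable and the degreewise split short exact sequence $0 \to Y \to C(f) \to \Sigma X \to 0$ realizes the cofiber triangle of a chain map $f$, such an $f$ is a weak equivalence in $\mathfrak{M}$ if and only if $C(f)$ is trivial, i.e.\ $C(f) \in \class{V}_{\class{S}}$. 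For~(3): every object of $\mathfrak{M}$ is fibrant and, by the $\homcomplex$-description, a chain map out of a complex in $\class{C}_{\class{S}}$ factors through an object of $\class{V}_{\class{S}}$ precisely when it is null homotopic, so the model homotopy relation on $\class{C}_{\class{S}}$ is chain homotopy and $\textnormal{Ho}(\mathfrak{M}) \simeq K(\class{C}_{\class{S}})$; cofibrant replacement (which by the cone characterization becomes an isomorphism in $K(R)/\class{V}_{\class{S}}$) then shows the composite $K(\class{C}_{\class{S}}) \hookrightarrow K(R) \to K(R)/\class{V}_{\class{S}}$ is an equivalence. For~(4): the cotorsion pair is cogenerated by the set $\class{S} \cup \{D^n(M_i)\}$ and $\ch$ is locally presentable, so $\mathfrak{M}$ is cofibrantly generated, and the homotopy category of a cofibrantly generated stable model structure is well generated.

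For~(5), if each complex in $\class{S}$ is a bounded complex of finitely presented modules then so is every complex in the cogenerating set $\class{S} \cup \{D^n(M_i)\}$, and these are finitely presented (compact) objects of $\ch$; hence $\mathfrak{M}$ is finitely generated in the sense of~\cite[Section~7.4]{hovey-model-categories} and $\textnormal{Ho}(\mathfrak{M})$ is compactly generated. For~(6), with $R$ commutative I would first record that $\otimes_R$ is compatible with the exact structure $\ch_{pur}$ (tensoring a degreewise pure short exact sequence of complexes with a fixed complex stays degreewise pure, because over a commutative ring $M \otimes_R -$ preserves purity) and that a tensor product of two pure-projective modules is pure-projective; then Hovey's pushout-product and unit axioms reduce, for a projective cotorsion pair, to the three conditions: the unit $S^0(R)$ is cofibrant (hypothesis), $\class{C}_{\class{S}} \otimes_R \class{C}_{\class{S}} \subseteq \class{C}_{\class{S}}$ (hypothesis), and $T \otimes_R C$ is a projective object of $\ch_{pur}$ whenever $T$ is projective and $C \in \class{C}_{\class{S}}$---which holds because $T \otimes_R C$ has pure-projective components and, as $-\otimes_R C$ is an additive functor and $T$ is contractible, is itself contractible. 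The step I expect to be the main obstacle is establishing the $\homcomplex$-description of $\class{V}_{\class{S}}$ and deducing from it both the thickness of $\class{V}_{\class{S}}$ and the computation $\class{C}_{\class{S}} \cap \class{V}_{\class{S}} = \textnormal{Proj}(\ch_{pur})$: everything downstream rests on this, and it is exactly here that the hypothesis ``$\class{S}$ is closed under suspensions'' is essential. The identification of $\textnormal{Ho}(\mathfrak{M})$ with the Verdier quotient in~(3) and the monoidality in~(6) are the other delicate points, each requiring that the appropriate general statement be quoted and seen to apply over the pure exact structure.
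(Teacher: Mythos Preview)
Your proposal is correct and follows essentially the same route as the paper: cogenerate with Proposition~\ref{prop-cogeneration-by-set}, use the $\homcomplex$-characterization of $\class{V}_{\class{S}}$ to get thickness and to identify $\class{C}_{\class{S}}\cap\class{V}_{\class{S}}$ with the projectives of $\ch_{pur}$, then read off (1)--(6). Your treatments of (2) (via the description $\class{V}_{\class{S}} = \{X : K(R)(C,\Sigma^n X)=0\}$ and stability) and of the trivial-cofibrant condition in (6) (via ``contractible tensor anything is contractible'') are slightly slicker variants of the paper's arguments, but the strategy is the same.
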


 \begin{proof}
 We have at once from Proposition~\ref{prop-cogeneration-by-set} that $$(\leftperp{(\rightperp{\class{S}})}, \rightperp{\class{S}}) = (\class{C}_{\class{S}}, \class{V}_{\class{S}})$$ is a functorially complete cotorsion pair with respect to the degreewise pure exact structure $\ch_{pur}$. Moreover, the class $\class{C}_{\class{S}}$ consists precisely of direct summands (retracts) of transfinite degreewise pure extensions of complexes in $\class{S}\cup \{D^n(M_i)\}$. It follows that every complex $C \in \class{C}_{\class{S}}$ is a complex of pure-projectives, since the class $\class{PP}$ is closed under direct 
summands and transfinite pure extensions. 

To show that $\mathfrak{M} = (\class{C}_{\class{S}}, \class{V}_{\class{S}}, All)$ is an abelian model structure means to show: (i) $(\class{C}_{\class{S}}, \class{V}_{\class{S}})$ is a complete cotorsion pair (Done!), (ii) $(\class{C}_{\class{S}}\cap \class{V}_{\class{S}}, All)$ is a complete cotorsion pair, and (iii) $\class{V}_{\class{S}}$ is a thick class. (All with repect to the exact structure $\ch_{pur}$.) Before showing (ii) we will show $\class{V}_{\class{S}}$ is thick and satisfies the properties in (1). 

The class $\class{V}_{\class{S}} := \rightperp{\class{S}}$ is closed under suspensions, since $\class{S}$ is assumed to be. We have that $X \in 
\class{V}_{\class{S}}$ if and only if for all $C \in \class{C}_{\class{S}}$ (or just all $C\in\class{S}$) we have $\Ext^1_{pur}(C,\Sigma^n X) =  \Ext^1_{dw}(C,\Sigma^n X) =  0$. By Lemma~\cite[Lemma~2.1]{gillespie} this is equivalent to the statement that $\homcomplex(C,X)$ is acyclic for all $C \in \cat{C}$ (or just all $C\in\class{S}$).
It follows that $\class{V}_{\class{S}}$ is thick, 
for suppose we have a degreewise pure short exact sequence
\[
0 \xrightarrow{} X \xrightarrow{} Y \xrightarrow{} Z \xrightarrow{} 0,
\]
where two out of three of the complexes are in $\class{V}_{\class{S}}$.  Now suppose
$C\in \cat{C}$.  Since $C$ is a complex of pure-projectives, the resulting
sequence
\[
0 \xrightarrow{} \homcomplex (C,X) \xrightarrow{} \homcomplex (C,Y) \xrightarrow{}
\homcomplex (C,Z)\xrightarrow{} 0
\]
is still short exact.  Since two out of three of these complexes are
acyclic, so is the third. Moreover, $\class{V}_{\class{S}} := \rightperp{\class{S}}$ is closed under direct summands, so $\class{V}_{\class{S}}$ is a thick class.  It is also now clear that if $X$ is contractible, then  $X\in \class{V}_{\class{S}}$.  

We have shown the properties in (1) , but to obtain the model structure we still need to see why $(\class{C}_{\class{S}}\cap \class{V}_{\class{S}}, All)$ is a complete cotorsion pair. Note that this is equivalent to showing that $\class{C}_{\class{S}}\cap \class{V}_{\class{S}}$ is precisely the class of projectives in the exact category $\ch_{pur}$. But note that if $X \in \class{C}_{\class{S}}\cap \class{V}_{\class{S}}$, then $X \xrightarrow{1_X} X$ is null homotopic, which means $X$ is a contractible complex with pure-projective components. It follows from~\cite[Lemma~4.5]{gillespie-G-derived} (its the special case of when $G = \oplus_i M_i$ is the direct sum of all finitely presented modules) that $\class{C}_{\class{S}}\cap \class{V}_{\class{S}} = \widetilde{\class{PP}}$, the class of all categorical projectives in $\ch_{pur}$. Moreover, $(\widetilde{\class{PP}}, All)$ is a complete cotorsion pair cogenerated by the set $\{D^n(M_i)\}$ of all disks on finitely presented modules. This completes the proof that $\mathfrak{M} = (\class{C}_{\class{S}}, \class{V}_{\class{S}}, All)$ is an abelian model structure with respect to $\ch_{pur}$, and satisfies the properties in (1).

We prove (2). Let $f : X \xrightarrow{} Y$ be a homotopy equivalence. Then it factors as $f=pi$ where $i$ is a degreewise split monomorphism with contractible cokernel and $p$ is a degreewise split epimorphism with contractible kernel. Since $\class{V}_{\class{S}}$ is a thick class in $\ch_{pur}$ and contains all contractible complexes, it follows that $X \in \class{V}_{\class{S}}$ if and only if $Y \in \class{V}_{\class{S}}$. Next, recall that a thick subcategory of $K(R)$ is a strictly full subcategory that is closed under suspensions, mapping cones and direct summands~\cite[Definitions~1.5.1 and~2.1.6]{neeman-book}. We already know $\class{V}_{\class{S}}$ is closed under suspensions and direct summands (since up to a homotopy equivalence, direct sums in $K(R)$ agree with direct sums in $\ch$; for example see~\cite[Lemma~2.2]{gillespie-frontiers-china}). To see that $\class{V}_{\class{S}}$ is closed under taking mapping cones, recall that for any chain map $f : X \xrightarrow{} Y$ there is a degreewise split short exact sequence $$0 \xrightarrow{} Y \xrightarrow{} C(f) \xrightarrow{} \Sigma X \xrightarrow{} 0.$$ In particular,  $C(f)$ is just an extension in $\ch_{pur}$ of $Y$ and $\Sigma X$, and so $C(f) \in \class{V}_{\class{S}}$ whenever $X,Y\in\class{V}_{\class{S}}$. Therefore, $\class{V}_{\class{S}}$ is a thick subcategory of $K(R)$.

To complete the proof of (2), we must show that a chain map $f : X \xrightarrow{} Y$  is a weak equivalence in $\mathfrak{M}$ if and only if its cone $C(f) \in \class{V}_{\class{S}}$. First, we note that any cone $C(f)$ can be constructed by pushout along the suspension sequence $0 \xrightarrow{} X \xrightarrow{} \oplus D^{n+1}(X_n) \xrightarrow{} \Sigma X \xrightarrow{} 0$, and $\oplus D^{n+1}(X_n)$ is a contractible complex.  Next, we recall that a degreewise pure monomorphism $i$ is a weak equivalence in $\mathfrak{M}$ if and only if its cokernel $\cok{i} \in \class{V}_{\class{S}}$, by~\cite[Lemma~5.8]{hovey}.  From the pushout construction, one can check that the cone $C(i)$ is in $\class{V}_{\class{S}}$ if and only if $\cok{i}$ is in $\class{V}_{\class{S}}$. A similar statement holds for any degreewise pure epimorphism $p$, by using~\cite[Dual of Prop.~2.15]{buhler-exact categories}.
Now by the factorization axiom, any morphism $f$ factors as $f=pi$ where $i$ is a degreewise pure monomorphism (with cokernel in $\class{C}_{\class{S}}$) and $p$ is a degreewise pure epimorphism with kernel  $\ker{p} \in \class{V}_{\class{S}}$. By the 2 out of 3 axiom, we have that $f$ is a weak equivalence if and only if $\cok{i} \in \class{V}_{\class{S}}$ too. 
By the octahedral axiom, there is an exact triangle $C(i) \xrightarrow{} C(f) \xrightarrow{} C(p) \xrightarrow{} \Sigma C(i)$ in  $K(R)$. Since exact triangles arise from short exact sequences, two terms of the triangle are in $\class{V}_{\class{S}}$ if and  only if the third term is too. So since $C(p), \ker{p} \in \class{V}_{\class{S}}$, we have $C(f) \in \class{V}_{\class{S}}$ if and only if $C(i), \cok{i} \in \class{V}_{\class{S}}$, which as already noted occurs if and only if $f$ is a weak equivalence. 

For statements (3) and (4), we refer the reader to proofs of similar statements. For the Verdier localization characterization, we see a similar argument in~\cite[Theorem~4.4]{gillespie-K-flat}. That the model structure is cofibrantly generated and the homotopy category is equivalent to $K(\class{C}_{\class{S}})$, see the arguments in the proof of~\cite[Theorem~3.9]{gillespie-K-flat}. It follows from~\cite{rosicky-brown representability combinatorial model srucs} that the homotopy category is well-generated.

We prove (5). We are claiming that the model structure is finitely generated whenever $\class{S}$ consists of bounded complexes of finitely presented modules. In this case, one can find for any $S \in \class{S}$, a short exact sequence in $\ch_{pur}$
$$0 \xrightarrow{} K_S \xrightarrow{i_S} P_S \xrightarrow{} S \xrightarrow{} 0$$
where $P_S = \oplus D^n(M_i)$ is a finite direct sum of disks on finitely presented modules $M_i$. Then $P_S$ is a finitely generated projective object of $\ch_{pur}$ and $K_S$ is also finitely generated, since it is bounded and each component must be finitely generated.  So the domains and codomains of the maps $i_S$ are \emph{finite relative to cofibrations}, and so we have a set $I = \{i_S\}$ of generating cofibrations, showing that the model structure is \emph{fiitely generated} in the sense of~\cite[Section~7.4]{hovey-model-categories}. In particular, $\textnormal{Ho}(\mathfrak{M})$ is compactly generated by~\cite[Corollary~7.4.4]{hovey-model-categories}.

Finally, for (6), assume $R$ is a commutative ring. We claim that the model structure is monoidal with respect to the usual tensor product of chain complexes. We refer the reader to the proof of~\cite[Theorem~5.1]{gillespie-K-flat}, which is similar in that it checks conditions (a), (b), (c), and (d) of~\cite[Theorem~7.2]{hovey} for an abelian model structure on $\ch_{pur}$. Our hypothesis that $\class{C}_{\class{S}}$ is closed under tensor products is the condition (b) and our hypothesis that $S^0(R) \in \class{C}_{\class{S}}$ is the condition (d).  The remaining conditions (a) and (c) are as in~\cite[Theorem~5.1]{gillespie-K-flat}. 
For condition (c), we emphasize that $X \otimes_R Y$ is always pure acyclic whenever $Y$ is pure acyclic. So if moreover $X$ and $Y$ are also both in $\dwclass{PP}$ then we will have $X \otimes_R Y \in \dwclass{PP}\cap\class{A}_{pur}$. By Corollary~\ref{cor-pp-contractible} this implies  $X \otimes_R Y$ is contractible and so it is in $\class{V}_{\class{S}}$. 
 \end{proof}

For our first application of Theorem~\ref{them-proj-cot-pair} we take the set $\class{T} = \{S^n(M_i)\}$ from Theorem~\ref{thm-bounded above complexes of finitely presented cogenerate}. In this case, $(\class{C}_{\class{T}}, \class{V}_{\class{T}}) = (\dwclass{PP}, \class{A}_{pur})$, and we are lead to the following corollary. We note that the essentials here have already been shown in different ways by combining results of Krause~\cite{krause-approximations and adjoints} and Stovicek in~\cite{stovicek-purity}.

\begin{corollary}\label{cor-pure-proj-model}
The triple $(\dwclass{PP}, \class{A}_{pur}, All)$ is a abelian model structure on the exact category $\ch_{pur}$ and satisfies each of the following properties:
\begin{enumerate}
\item Its homotopy category recovers $\class{D}_{pur}(R) = K(R)/\class{A}_{pur}$, the pure derived category of $R$, and the model structure provides a canonical equivalence $\class{D}_{pur}(R) \cong K(\class{PP})$, where $K(\class{PP})$ is the full subcategory of $K(R)$ consisting of all complexes (homotopy equivalent to one) in $\dwclass{PP}$. 
\item $\class{D}_{pur}(R)$ is a compactly generated triangulated category. In fact, the model structure is finitely generated in the sense of~\cite[Section~7.4]{hovey-model-categories}. 
\item In the case that $R$ is commutative, the model structure is monoidal with respect to the usual tensor product of chain complexes. 
\end{enumerate}
Moreover, the set $\class{T} = \{S^n(M_i)\}$ of Theorem~\ref{thm-bounded above complexes of finitely presented cogenerate} is a set of compact weak genrators for $\class{D}_{pur}(R)$, and the compact objects are the categorically finitely presented chain complexes.
\end{corollary}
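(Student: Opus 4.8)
The Corollary is the instance $\class{S} = \class{T} = \{S^n(M_i)\}$ of Theorem~\ref{them-proj-cot-pair}. I would begin by checking that $\class{T}$ is an admissible input: the $M_i$ are finitely presented, hence pure-projective, so $\class{T}$ is a set of complexes of pure-projective modules, and it is suspension closed since $\Sigma^k S^n(M_i) = S^{n+k}(M_i)$. By Theorem~\ref{thm-bounded above complexes of finitely presented cogenerate} we have $(\class{C}_{\class{T}}, \class{V}_{\class{T}}) = (\dwclass{PP}, \class{A}_{pur})$, so Theorem~\ref{them-proj-cot-pair} at once yields the abelian model structure $(\dwclass{PP}, \class{A}_{pur}, All)$ on $\ch_{pur}$. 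Then: statement (1) is Theorem~\ref{them-proj-cot-pair}(3) with the substitutions $\class{V}_{\class{T}} = \class{A}_{pur}$ (whence $K(R)/\class{V}_{\class{T}} = \class{D}_{pur}(R)$) and $\class{C}_{\class{T}} = \dwclass{PP}$ (whence $K(\class{C}_{\class{T}}) = K(\class{PP})$); statement (2) is Theorem~\ref{them-proj-cot-pair}(5), each $S^n(M_i)$ being a bounded, one-term, complex of finitely presented modules; and statement (3), for $R$ commutative, is Theorem~\ref{them-proj-cot-pair}(6), whose two hypotheses hold here because $R$ is finitely presented, so $S^0(R) \in \dwclass{PP}$, and because $\dwclass{PP}$ is closed under $\otimes_R$ --- in each degree $(P \otimes_R Q)_n$ is a direct sum of tensor products of pure-projectives, each of which is a summand of a tensor product of fp-free modules, which is fp-free over a commutative ring, and arbitrary direct sums of pure-projectives are pure-projective.

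For the ``moreover'' clause I would first treat weak generation. Since $S^n(M_i)$ is cofibrant and every complex is fibrant in $\mathfrak{M}$, we have $\class{D}_{pur}(R)(S^n(M_i), X) = K(R)(S^n(M_i), X) = H_n[\Hom_R(M_i, X)]$, combining the identification of $\homcomplex(S^n(M_i), X)$ with a shift of $\Hom_R(M_i, X)$ with the homology formula recalled in Section~\ref{sec-preliminaries}. These groups vanish for all $n$ and all $i$ exactly when each $\Hom_R(M_i, X)$ is acyclic, i.e. exactly when $X$ is pure acyclic, i.e. when $X \in \class{A}_{pur} = \class{V}_{\class{T}}$, i.e. when $X \cong 0$ in $\class{D}_{pur}(R)$; so $\class{T}$ generates. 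Each $S^n(M_i)$ is compact because coproducts in $\class{D}_{pur}(R)$ agree, up to homotopy equivalence, with those in $\ch$, while $\Hom_R(M_i, -)$ (for $M_i$ finitely presented) and $H_n$ both commute with coproducts.

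For the compact objects, recall that the categorically finitely presented objects of $\ch$ are precisely the bounded complexes of finitely presented modules (detected through the projective generator $\bigoplus_n D^n(R)$ and its finite direct sums), and that such complexes automatically lie in $\dwclass{PP}$. One direction: given such a $C$ of length $\leq b-a$, the brutal-truncation sequences $0 \xrightarrow{} C_{<b} \xrightarrow{} C \xrightarrow{} S^b(C_b) \xrightarrow{} 0$ are degreewise split, hence short exact in $\ch_{pur}$ and give distinguished triangles in $K(R)$, so induction on the length (base case $C = S^n(M_i) \in \class{T}$) places $C$ in the triangulated subcategory of $\class{D}_{pur}(R)$ generated by $\class{T}$, whence $C$ is compact. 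Conversely, by Neeman's theorem the compact objects of the compactly generated category $\class{D}_{pur}(R)$ form the thick (= idempotent-completed triangulated) subcategory generated by $\class{T}$; so it suffices to see that the essential image $\mathcal{I}$ of the bounded homotopy category $K^b(R\textnormal{-mod})$ of finitely presented modules, inside $\class{D}_{pur}(R)$, is triangulated, contains $\class{T}$, and is idempotent complete. It contains $\class{T}$ and is triangulated since $K^b(R\textnormal{-mod}) \to \class{D}_{pur}(R)$ is a triangulated functor; it is fully faithful because bounded complexes of finitely presented modules are cofibrant and every object is fibrant, so Hom-sets there are computed as chain homotopy classes; and $K^b(R\textnormal{-mod})$ is idempotent complete because finitely presented modules form an idempotent complete additive category (summands of finitely presented modules are finitely presented) and the bounded homotopy category of such a category is idempotent complete. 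A fully faithful functor out of an idempotent complete category has idempotent complete essential image, so $\mathcal{I}$ is idempotent complete; hence $\mathcal{I}$ contains the thick closure of $\class{T}$, and by the first direction it equals it, identifying the compact objects with the categorically finitely presented chain complexes.

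I expect the last paragraph to carry the real content: everything before it is bookkeeping layered on Theorem~\ref{them-proj-cot-pair}, whereas the genuine point is that a compact object is isomorphic to a bounded complex of finitely presented modules and not merely a retract of one. The two facts I would lean on there are the full faithfulness of $K^b(R\textnormal{-mod}) \hookrightarrow \class{D}_{pur}(R)$ --- which is clean precisely because such complexes are simultaneously cofibrant and fibrant in $\mathfrak{M}$ --- and the idempotent completeness of $K^b(R\textnormal{-mod})$; one could replace the latter appeal to the general result on bounded homotopy categories by a direct splitting argument for idempotents on bounded complexes of finitely presented modules, but that amounts to essentially the same work.
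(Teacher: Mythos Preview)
Your proof is correct and, for statements (1)--(3), tracks the paper's approach exactly: apply Theorem~\ref{them-proj-cot-pair} with $\class{S}=\class{T}$, the only nontrivial verification being closure of $\dwclass{PP}$ under tensor products, which the paper spells out in the same way.

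For the ``moreover'' clause the paper takes a much shorter route than you do. It simply cites \cite[Remark~1, p.~388]{gillespie-G-derived} for the passage from a cogenerating set of the cotorsion pair to a set of compact weak generators, invokes the general principle that the compact objects form the thick closure of $\class{T}$, and then defers to Krause~\cite[Cor.~4.6]{krause-approximations and adjoints} and \cite[Lemma~4.1.1]{garcia-rozas} to identify that thick closure with the bounded complexes of finitely presented modules, i.e.\ the categorically finitely presented complexes. Your argument unfolds these citations: the direct computation $\class{D}_{pur}(R)(S^n(M_i),X)\cong H_n\Hom_R(M_i,X)$ is precisely the content behind the first reference, and your full-faithfulness plus idempotent-completeness argument for $K^b(R\textnormal{-mod})\hookrightarrow\class{D}_{pur}(R)$ is essentially what lies behind Krause's cited corollary. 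The trade-off is that the paper's proof is a few lines of references, while yours is self-contained but leans on the nontrivial fact that $K^b(\mathcal{A})$ is idempotent complete for an idempotent complete additive $\mathcal{A}$ --- true, but itself a result that deserves a citation (e.g.\ Balmer--Schlichting) rather than being asserted as routine.
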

 
 \begin{proof}
For the numbered statements, it is only left to comment as to why the model structure is monoidal, assuming $R$ is a commutative ring.  By the monoidal criteria provided in   Theorem~\ref{them-proj-cot-pair} we only need to show that $X \tensor_R Y \in \dwclass{PP}$ whenever $X,Y \in \dwclass{PP}$. To see this, we first note that the tensor product $M \otimes_R N$ of two $R$-modules is again an $R$-module and is finitely presented whenever $M$ and $N$ are both finitely presented. (For a nice proof of this, see Lemma~10.12.14 of the ``Stacks Project''.) Since tensor products commute with direct sums, and since the class $\class{PP}$ of pure-projective modules is closed under direct sums and direct summands it follows that $\class{PP}$ is closed under tensor products. Thus $\dwclass{PP}$ is closed under (chain complex) tensor products, as desired.
 
Let us prove the final statement about the compact objects.  Again, $\class{T} = \{S^n(M_i)\}$ where $\{M_i\}$ is some representative set for all the isomorphism classes of finitely presented $R$-modules. Theorem~\ref{thm-bounded above complexes of finitely presented cogenerate} showed that $\class{T}$ cogenerates the complete cotorsion pair $(\dwclass{PP}, \class{A}_{pur})$, in the exact category $\ch_{pur}$. This corresponds to the fact that $\class{T}$ is a set of compact weak generators for $\class{D}_{pur}(R)$. This is made explicit in~\cite[Remark~1, pp.~388]{gillespie-G-derived}. From the general theory of triangulated categories, it follows that the full subcategory of all compact objects of $\class{D}_{pur}(R)$ equals the smallest thick subcategory  containing $\class{T}$. We see that this corresponds to the chain homotopy category of all bounded complexes of finitely presented modules. This aligns with what Krause already found in~\cite[Corollary~4.6]{krause-approximations and adjoints}. Note that by~\cite[Lemma~4.1.1]{garcia-rozas}, this says that the compact objects of $\class{D}_{pur}(R)$ correspond to the \emph{categorically} finitely presented chain complexes. 
 \end{proof}
 
\section{Acyclic complexes of pure-projectives}\label{sec-acyclic-pp}

 Throughout this section, we let $\exclass{PP}$ denote the class of all acyclic (\emph{exact}) complexes of pure-projective $R$-modules. We let $\class{V} = \rightperp{\exclass{PP}}$ denote the right $\Ext^1$-orthogonal of $\exclass{PP}$ in $\ch_{pur}$. We will now turn to our second application of Theorem~\ref{them-proj-cot-pair}. We will find a set $\class{S}$ of complexes which cogenerates a complete cotorsion pair $$(\class{C}_{\class{S}}, \class{V}_{\class{S}}) =  (\exclass{PP},\class{V})$$ in the exact category $\ch_{pur}$. 
 We note that $\class{V}$ is precisely the class of \emph{K-absolutely pure} complexes studied in~\cite{emmanouil-kaperonis-K-flatness-pure}. 

\begin{proposition}\label{prop-filtrations for complexes of pure-projectives}
Let  $\kappa > \text{max}\{\, |R| \,  , \, \omega \,\}$ be a regular cardinal. Let $F$ be a nonzero acyclic complex of fp-free modules. That is, each component $F_n = \oplus_{i \in I_n} M_i$ is a direct sum of finitely presented modules $M_i$. Then $F$ is a transfinite degreewise split extension (continuous union) $F = \cup_{\alpha < \lambda}
Q_{\alpha}$ where each $Q_{\alpha}, Q_{\alpha + 1}/Q_{\alpha}$ are
also acyclic complexes of fp-free modules and such that $|Q_0|,
|Q_{\alpha + 1}/Q_{\alpha}| < \kappa$.
\end{proposition}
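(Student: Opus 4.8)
The plan is to mimic the filtration argument from Theorem~\ref{thm-bounded above complexes of finitely presented cogenerate} (which in turn imitates~\cite[Theorem~A.3]{bravo-gillespie-hovey}), but now keeping careful track of \emph{acyclicity} as well as fp-freeness at each stage, and bounding the cardinality of each successive quotient by $\kappa$. The key point is a ``closing-off'' construction: starting from any single summand $M_j \subseteq F_n$, build a subcomplex $Q^0 \subseteq F$ that is itself an acyclic complex of fp-free modules with $|Q^0| < \kappa$. To do this one cannot simply take the smallest subcomplex generated by $M_j$ (that need not be acyclic), so instead one alternately enlarges a candidate subcomplex in two ways: (a) \emph{closing under the differential}, i.e. whenever a finite subset of summands in degree $n$ has been chosen, add a finite subset of summands in degree $n-1$ containing its image; and (b) \emph{closing under exactness}, i.e. whenever an element $z$ of the candidate subcomplex is a cycle in $F$, since $F$ is acyclic there is some $x \in F_{n+1}$ with $dx = z$, and $x$ lies in finitely many summands, which we add. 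Iterating (a) and (b) countably many times and taking the union yields a subcomplex $Q^0$ which is acyclic (every cycle of $Q^0$ is a cycle of $F$, hence bounded by something already absorbed) and fp-free in each degree, and since each step adds only finitely many new finitely presented summands and $|R| < \kappa$, we get $|Q^0| < \kappa$ because $\kappa$ is regular and uncountable.

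Next I would check the crucial quotient property: if $Q \subseteq F$ is a subcomplex built by the above closing-off procedure, then $F/Q$ is again an acyclic complex of fp-free modules, and moreover $F/Q$ can be identified with the ``complementary'' direct-sum complex $\bigoplus_{i \in I_n \setminus L_n} M_i$ in each degree (so the extension $0 \to Q \to F \to F/Q \to 0$ is degreewise split). That $F/Q$ is fp-free and the sequence is degreewise split is immediate from the fact that $Q_n$ is a direct summand of $F_n$ picked out by a subset $L_n \subseteq I_n$. Acyclicity of $F/Q$ follows from the long exact homology sequence together with $H_*Q = 0$ and $H_*F = 0$; alternatively one argues directly that a cycle in $F/Q$ lifts to $F$, is hit by a boundary in $F$, and one corrects by an element of $Q$ using that $Q$ is closed under exactness.

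With these two facts in hand, the transfinite induction runs exactly as in Theorem~\ref{thm-bounded above complexes of finitely presented cogenerate}: set $Q_0 = Q^0$; given $Q_\alpha$ with $F/Q_\alpha$ a nonzero acyclic fp-free complex, apply the closing-off construction inside $F/Q_\alpha$ to a nonzero summand to get $Q_{\alpha+1}/Q_\alpha \subseteq F/Q_\alpha$ acyclic, fp-free, of size $< \kappa$, with $F/Q_{\alpha+1} \cong (F/Q_\alpha)/(Q_{\alpha+1}/Q_\alpha)$ still acyclic and fp-free; at limit ordinals take unions, noting that a union of acyclic complexes of fp-free modules is again such (homology commutes with filtered colimits, and the component-wise subsets $L_n$ just get unioned). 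Since the $|L_n|$ strictly increase at successor steps while $F$ has a fixed underlying index set, the process terminates at some ordinal $\lambda$ with $F = \bigcup_{\alpha < \lambda} Q_\alpha$, giving the desired continuous union of degreewise split extensions with each $Q_0$, $Q_{\alpha+1}/Q_\alpha$ acyclic, fp-free, and of cardinality $< \kappa$.

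I expect the main obstacle to be the closing-off construction in the first paragraph --- specifically, verifying that the countable back-and-forth between ``close under $d$'' and ``close under exactness'' genuinely produces an \emph{acyclic} subcomplex while still only absorbing $<\kappa$ generators. One must be a little careful that a cycle appearing at stage $k$ of the construction gets a bounding element added by stage $k+1$, so that in the union every cycle is a boundary; the regularity and uncountability of $\kappa$ (and $\kappa > |R|$, which controls how many summands an individual element can touch) are exactly what make the cardinality bookkeeping go through. The rest is routine and parallels the cited arguments.
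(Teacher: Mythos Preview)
Your approach is essentially the same as the paper's. The paper's proof is terse: it invokes the ``Exact Covering Lemma'' \cite[Lemma~7.2]{bravo-gillespie-hovey} to produce, inside any nonzero acyclic complex of fp-free modules, a nonzero acyclic fp-free subcomplex $Q^0$ with $|L^0_n| < \kappa$, and then runs the transfinite induction exactly as you describe. Your ``closing-off'' construction in the first paragraph is simply a sketch of the proof of that cited lemma, so structurally the two arguments coincide.

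One small correction to your bookkeeping. Your claim that ``each step adds only finitely many new finitely presented summands'' is not correct for step~(b). When you close under exactness, you must add a bounding element for \emph{every} cycle of the current subcomplex in a given degree, and the cycle module of a finite direct sum of finitely presented modules need not be finitely generated --- it can have up to $\max(|R|,\omega)$ elements. Relatedly, the hypothesis $\kappa > |R|$ is not there to ``control how many summands an individual element can touch'' (any element of a direct sum has finite support regardless of $|R|$); rather, it guarantees $|M_i| < \kappa$ for each finitely presented $M_i$, so that a subcomplex with fewer than $\kappa$ summands per degree has fewer than $\kappa$ elements, hence fewer than $\kappa$ cycles to bound. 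With this correction the iteration adds $<\kappa$ summands at each of countably many stages, and regularity of $\kappa$ gives $|Q^0| < \kappa$. The rest of your argument is fine and matches the paper.
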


\begin{proof}
Note that $|M_i| < \kappa$ since each $M_i$ is finitely presented. 
So to start, we can use  the ``Exact Covering Lemma''~\cite[Lemma~7.2]{bravo-gillespie-hovey}  to find a nonzero $Q^0 \subseteq F$ of the
form $Q^0_n = \oplus_{i \in L^0_n} M_i$ for some subcollections
$L^0_n \subseteq I_n$ having $|L^0_n| < \kappa$ and such that $Q^0$ is still acyclic. Note that $Q^0$ and $F/Q^0$ are each also
acyclic complexes of fp-free modules, and $$0 \xrightarrow{} Q^0
\xrightarrow{} F \xrightarrow{} F/Q^0 \xrightarrow{} 0$$ is a
degreewise split short exact sequence. In particular, this is a short exact sequence in $\ch_{pur}$. So if it happens that $F/Q^0$ is nonzero we can in turn find
another nonzero subcomplex $Q^1/Q^0 \subseteq F/Q^0$ with $Q^1/Q^0$ and
$(F/Q^0)/(Q^1/Q^0) \cong F/Q^1$ both still acyclic complexes
of fp-free modules, and with $|Q^1/Q^0| < \kappa$. Note that as in the proof of Theorem~\ref{thm-bounded above complexes of finitely presented cogenerate}, we can
identify these quotients, such as $F/Q^0$, with the complexes whose degree $n$
entry is $\oplus_{i \in I_n-L^0_n} M_i$.  By repeatedly applying the ``Exact Covering Lemma''~\cite[Lemma~7.2]{bravo-gillespie-hovey}, we may continue to find an increasing chain of (degreewise split) inclusions $0 \neq Q^0 \subseteq Q^1
\subseteq Q^2 \subseteq \cdots $ corresponding to a nested union of
subsets $L^0_n \subseteq L^1_n \subseteq L^2_n \subseteq \cdots$ for
each $n$. Assuming this process doesn't terminate at a finite step, we set $Q^{\omega} =
\cup_{\alpha < \omega} Q^{\alpha}$ and note that $Q^{\omega}_n =
\oplus_{i \in L^{\omega}_n} M_i$ where $L^{\omega}_n =
\cup_{\alpha < \omega} L^{\alpha}_n$. So still, $Q^{\omega}$ and
$F/Q^{\omega}$ are complexes of fp-free modules and are acyclic since they are direct limits of acyclic complexes. Therefore we
can continue this process with $F/Q^{\omega}$ to obtain $Q^{\omega
+1}$ with all the same properties we desire. Using this process we can
obtain an ordinal $\lambda$ and a continuous union $F = \cup_{\alpha <
\lambda} Q^{\alpha}$ with each $Q_{\alpha}, Q_{\alpha + 1}/Q_{\alpha}$
acyclic complexes of fp-frees and having $|Q_0|,
|Q_{\alpha +1}/Q_{\alpha}| < \kappa$.
\end{proof}

\begin{theorem}\label{theorem-acyclic complexes of pure-projectives}
$(\exclass{PP}, \class{V})$ is a complete cotorsion pair, cogenerated by a set $\class{S}$ in the exact category $\ch_{pur}$. 
\end{theorem}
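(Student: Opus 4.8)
The plan is to produce an explicit set $\class{S}$ and then deduce everything from Proposition~\ref{prop-cogeneration-by-set} together with the two structural results already in hand. Fix a regular cardinal $\kappa > \max\{|R|,\omega\}$ and let $\class{S}$ be a set of representatives for the isomorphism classes of all acyclic complexes $Q$ of fp-free modules with $|Q| < \kappa$. Since suspension preserves acyclicity, fp-freeness and cardinality, $\class{S}$ is closed under suspensions; and since every finitely presented module has cardinality at most $\max\{|R|,\omega\} < \kappa$, each disk $D^n(M_i)$ already lies in $\class{S}$, so $\class{S}\cup\{D^n(M_i)\} = \class{S}$. By Proposition~\ref{prop-cogeneration-by-set}, $(\class{C}_{\class{S}},\class{V}_{\class{S}}) = (\leftperp{(\rightperp{\class{S}})},\rightperp{\class{S}})$ is a functorially complete cotorsion pair in $\ch_{pur}$, and $\class{C}_{\class{S}}$ consists precisely of the retracts of transfinite degreewise pure extensions of complexes in $\class{S}$.

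It then suffices to prove $\class{C}_{\class{S}} = \exclass{PP}$; once this is known, the cotorsion pair identity $\class{V}_{\class{S}} = \rightperp{\class{C}_{\class{S}}} = \rightperp{\exclass{PP}} = \class{V}$ finishes the proof. For $\class{C}_{\class{S}}\subseteq\exclass{PP}$, I observe that every complex in $\class{S}$ is an acyclic complex of pure-projectives, and that $\exclass{PP}$ is closed under retracts (pure-projectives are retract-closed and acyclicity is inherited by retracts) and under transfinite degreewise pure extensions: in each degree a transfinite pure extension of pure-projectives remains pure-projective because $\class{PP}$ is the left class of a cotorsion pair in $R\textnormal{-Mod}_{pur}$, while a continuous union of acyclic subcomplexes is acyclic since homology commutes with filtered colimits and an honest degreewise short exact sequence of complexes gives a long exact homology sequence. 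For the reverse inclusion, take $P\in\exclass{PP}$. By Lemma~\ref{lemma-complexes of pure-projectives are retracts of complexes of fp-frees}, $P$ is a retract of an acyclic complex $F$ of fp-free modules, and by Proposition~\ref{prop-filtrations for complexes of pure-projectives}, $F = \cup_{\alpha<\lambda}Q_\alpha$ is a transfinite degreewise split extension with $Q_0$ and each $Q_{\alpha+1}/Q_\alpha$ an acyclic complex of fp-free modules of cardinality $<\kappa$ — that is, a complex in $\class{S}$. Hence $F$ is a transfinite degreewise pure extension of complexes in $\class{S}$, so $F\in\class{C}_{\class{S}}$, and therefore so is its retract $P$.

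As for difficulty, I do not expect any genuinely new obstacle: the hard work — extracting small acyclic fp-free subcomplexes via the ``Exact Covering Lemma'' — has been absorbed into Proposition~\ref{prop-filtrations for complexes of pure-projectives}, and the reduction from fp-free complexes to arbitrary complexes of pure-projectives into Lemma~\ref{lemma-complexes of pure-projectives are retracts of complexes of fp-frees}. The only points needing care are bookkeeping ones: making $\kappa$ large enough that $\class{S}$ contains all the disks $D^n(M_i)$, so that the extra term $\{D^n(M_i)\}$ in Proposition~\ref{prop-cogeneration-by-set} contributes nothing, and verifying that $\exclass{PP}$ is indeed closed under the two operations (retracts and transfinite degreewise pure extensions) that characterize $\class{C}_{\class{S}}$.
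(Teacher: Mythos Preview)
Your proof is correct and follows essentially the same route as the paper: choose a small set of acyclic complexes bounded by a regular cardinal $\kappa > \max\{|R|,\omega\}$, invoke Proposition~\ref{prop-cogeneration-by-set}, and then identify the left class with $\exclass{PP}$ using closure under retracts and transfinite degreewise pure extensions for one inclusion and Lemma~\ref{lemma-complexes of pure-projectives are retracts of complexes of fp-frees} together with Proposition~\ref{prop-filtrations for complexes of pure-projectives} for the other. The only cosmetic difference is that you take $\class{S}$ to consist of acyclic complexes of fp-free modules with $|Q|<\kappa$ (and explicitly check that the disks $D^n(M_i)$ already belong to $\class{S}$), whereas the paper takes all of $\exclass{PP}$ of cardinality $\leq\kappa$; either choice works for the same reason.
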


 \begin{proof}
For a chain complex $X$,  we define its cardinality to be $|X| :=|\coprod_{n\in\Z} X_n|$.
Let  $\kappa > \text{max}\{\, |R| \,  , \, \omega \,\}$ be a regular cardinal as in Proposition~\ref{prop-filtrations for complexes of pure-projectives}. Up to isomorphism, we can find a set $\class{S}$ (as opposed to a proper class) of complexes $X \in \exclass{PP}$ with cardinality $|X| \leq \kappa$. By Proposition~\ref{prop-cogeneration-by-set}, such a set $\class{S}$ cogenerates a functorially complete cotorsion pair $(\leftperp{(\rightperp{\class{S}})}, \rightperp{\class{S}})$ in the exact category $\ch_{pur}$. Moreover, the class $\leftperp{(\rightperp{\class{S}})}$ consists precisely of direct summands of transfinite degreewise pure extensions of complexes in $\class{S}$. We will show that $(\leftperp{(\rightperp{\class{S}})}, \rightperp{\class{S}}) = (\exclass{PP}, \class{V})$.

Of course, it is enough to show $\leftperp{(\rightperp{\class{S}})} = \exclass{PP}$. The containment $\leftperp{(\rightperp{\class{S}})} \subseteq \exclass{PP}$ follows from the containment $\class{S} \subseteq \exclass{PP}$ and the fact that $\exclass{PP}$ is closed under direct summands and transfinite degreewise pure extensions. This is because, as previously noted in the proof of Theorem~\ref{thm-bounded above complexes of finitely presented cogenerate}, the class $\class{PP}$ of pure-projective modules is closed under direct summands and transfinite pure extensions. Acyclic complexes are also closed under these operations, so $\exclass{PP}$ is closed under direct summands and transfinite extensions in the exact category $\ch_{pur}$.

On the other hand, it follows from Lemma~\ref{lemma-complexes of pure-projectives are retracts of complexes of fp-frees} and  Proposition~\ref{prop-filtrations for complexes of pure-projectives} that every $X \in \exclass{PP}$ is a direct summand of a transfinite degreewise pure extension of complexes in $\class{S}$. Therefore, $\exclass{PP} \subseteq \leftperp{(\rightperp{\class{S}})}$. 

This proves $(\exclass{PP}, \class{V})$ is a (functorially) complete cotorsion pair in the exact category $\ch_{pur}$, and it is cogenerated by the set $\class{S}$.
\end{proof}

Moreover, just like  $(\dwclass{PP}, \class{A}_{pur})$, we have that $(\exclass{PP}, \class{V})$ is a projective cotorsion pair in $\ch_{pur}$. Again, it just means that the triple $(\exclass{PP}, \class{V}, All)$ is an abelian model structure on the exact category $\ch_{pur}$. This all follows from Theorem~\ref{theorem-acyclic complexes of pure-projectives} by taking $\class{S}$ to be the set in the proof of Theorem~\ref{them-proj-cot-pair}. We summarize its main properties in the following statement.

\begin{corollary}\label{cor-exPP-model}
The triple $(\exclass{PP}, \class{V}, All)$ is a abelian model structure on the exact category $\ch_{pur}$ and satisfies each of the following properties:
\begin{enumerate}
\item A chain map $f : X \xrightarrow{} Y$  is a weak equivalence if and only if its cone $C(f) \in \class{V}$, and $\class{V}$ is a thick subcategory of the homotopy category $K(R)$.
\item Its homotopy category is equivalent to the Verdier quotient $K(R)/\class{V}$ and the model structure provides a canonical equivalence $K(R)/\class{V} \cong  K_{ac}(\class{PP})$, where $K_{ac}(\class{PP})$ is the full subcategory of $K(R)$ consisting of all complexes (homotopy equivalent to one) in $\exclass{PP}$. 
\item The model structure is cofibrantly generated and so $K(R)/\class{V}$ is a well-generated triangulated category. 
\end{enumerate}
\end{corollary}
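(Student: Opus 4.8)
\emph{Proof proposal.} The plan is to deduce Corollary~\ref{cor-exPP-model} directly from Theorem~\ref{them-proj-cot-pair}, feeding it the cogenerating set $\class{S}$ produced in the proof of Theorem~\ref{theorem-acyclic complexes of pure-projectives}. Recall that there $\class{S}$ was chosen to be a representative set of all complexes $X \in \exclass{PP}$ with cardinality $|X| \leq \kappa$. Since the suspension $\Sigma^n X$ has the same cardinality as $X$ and is again an acyclic complex of pure-projectives, this $\class{S}$ is already closed under suspensions (up to isomorphism), so it satisfies the standing hypothesis on $\class{S}$ in Theorem~\ref{them-proj-cot-pair}; its members are complexes of pure-projective modules as required. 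By Theorem~\ref{theorem-acyclic complexes of pure-projectives} this $\class{S}$ cogenerates the cotorsion pair $(\exclass{PP},\class{V})$, that is $(\class{C}_{\class{S}},\class{V}_{\class{S}}) = (\leftperp{(\rightperp{\class{S}})},\rightperp{\class{S}}) = (\exclass{PP},\class{V})$. Plugging this into Theorem~\ref{them-proj-cot-pair} immediately gives that $\mathfrak{M} = (\exclass{PP},\class{V},All)$ is an abelian (equivalently exact) model structure on $\ch_{pur}$, which is the opening assertion of the corollary.

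Each numbered property is then the specialization of the corresponding clause of Theorem~\ref{them-proj-cot-pair}. Statement~(1) — that $f$ is a weak equivalence in $\mathfrak{M}$ if and only if $C(f) \in \class{V}$, and that $\class{V}$ is a thick subcategory of $K(R)$ — is Theorem~\ref{them-proj-cot-pair}(2) with $\class{V}_{\class{S}} = \class{V}$. Statement~(2) is Theorem~\ref{them-proj-cot-pair}(3): it yields $\textnormal{Ho}(\mathfrak{M}) \cong K(R)/\class{V}$ together with the canonical equivalence $\textnormal{Ho}(\mathfrak{M}) \cong K(\class{C}_{\class{S}})$, and since $\class{C}_{\class{S}} = \exclass{PP}$ the subcategory $K(\class{C}_{\class{S}})$ is by definition $K_{ac}(\class{PP})$, giving $K(R)/\class{V} \cong K_{ac}(\class{PP})$. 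Statement~(3) — that $\mathfrak{M}$ is cofibrantly generated and hence $K(R)/\class{V}$ is well-generated — is Theorem~\ref{them-proj-cot-pair}(4).

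Since essentially all of the content has already been established in Theorems~\ref{theorem-acyclic complexes of pure-projectives} and~\ref{them-proj-cot-pair}, I do not expect a genuine obstacle: the proof amounts to checking that the two inputs match up, namely that the set $\class{S}$ of Theorem~\ref{theorem-acyclic complexes of pure-projectives} is of the kind Theorem~\ref{them-proj-cot-pair} accepts (suspension-closed, with pure-projective components) and that it cogenerates $(\exclass{PP},\class{V})$ — both immediate from the construction of $\class{S}$ — and then quoting the relevant clauses with notation matched. The one point worth flagging explicitly for the reader is that, in contrast with the finitely generated model structure of Corollary~\ref{cor-pure-proj-model}, here $\class{S}$ need not consist of bounded complexes of finitely presented modules, so Theorem~\ref{them-proj-cot-pair}(5) does not apply at this stage and we obtain only well-generation of $K(R)/\class{V}\cong K_{ac}(\class{PP})$; compact generation will be recovered later, via the recollement of Theorem~\ref{theorem-recollement-pp}.
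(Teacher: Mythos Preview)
Your proposal is correct and follows exactly the approach the paper intends: the paragraph preceding the corollary simply says that everything follows from Theorem~\ref{them-proj-cot-pair} by taking $\class{S}$ to be the cogenerating set produced in the proof of Theorem~\ref{theorem-acyclic complexes of pure-projectives}, and you have spelled this out with the necessary checks (suspension-closure of $\class{S}$, pure-projective components) and matched each numbered clause to the corresponding part of Theorem~\ref{them-proj-cot-pair}. Your closing remark about why only well-generation is obtained here, with compact generation deferred to Theorem~\ref{theorem-recollement-pp}, is also exactly what the paper notes immediately after the corollary.
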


In fact, we will show in Theorem~\ref{theorem-recollement-pp} that the triangulated category $K(R)/\class{V} \cong  K_{ac}(\class{PP})$ is compactly generated.

\section{DG-pure-projectives, recollement, and compact generation}\label{sec-dg-pp}

The cofibrant objects in the standard model structure on $\ch$ for the usual derived category, $\class{D}(R)$, are the DG-projective complexes.
Recall that a chain complex  $P$ is called \emph{DG-projective} if each $P_n$ is a projective $R$-module and if any chain map $P \xrightarrow{} E$, to any exact complex $E$, is null homotopic. In the same way we make the following definition.

\begin{definition}
A chain complex $P$ is called \emph{DG-pure-projective} if each $P_n$ is a pure-projective $R$-module and if any chain map $P \xrightarrow{} E$, to any exact complex $E$, is null homotopic. 
\end{definition}

Clearly any DG-projective complex is  DG-pure-projective. We note that the 
DG-projective complexes are precisely the K-projective complexes (in the sense of Spaltenstein~\cite{spaltenstein}) having projective components. In the same way we have the following.   

\begin{lemma}\label{lemma-DG-pp}
A chain complex $P$ is DG-pure-projective if and only if it is K-projective and each component $P_n$ is a pure-projective module.
\end{lemma}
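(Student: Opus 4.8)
The plan is to prove the two implications separately, noting that the "only if" direction is essentially definitional and the content lies in the "if" direction. First I would recall that a complex $P$ is K-projective if and only if $\homcomplex(P,E)$ is acyclic for every acyclic complex $E$ (equivalently, every chain map $P \to E$ into an acyclic complex is null homotopic — this is just the translation $H_n[\homcomplex(P,E)] = K(R)(P,\Sigma^{-n}E)$ recorded in Section~\ref{sec-preliminaries}). With this reformulation in hand, the forward implication is immediate: if $P$ is DG-pure-projective, then by definition each $P_n$ is pure-projective, and by definition every chain map $P \to E$ into an acyclic complex is null homotopic, which is exactly K-projectivity.

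For the reverse implication, suppose $P$ is K-projective with pure-projective components; I must show $P$ is DG-pure-projective. Again the component condition is part of the hypothesis, so the only thing to verify is that every chain map $P \to E$ to an exact complex $E$ is null homotopic. But "exact" and "acyclic" are used interchangeably in this paper (see Section~\ref{sec-preliminaries}), and K-projectivity of $P$ says precisely that $\homcomplex(P,E)$ is acyclic for all acyclic $E$, hence $K(R)(P,E) = H_0[\homcomplex(P,E)] = 0$, i.e.\ every chain map $P \to E$ is null homotopic. So $P$ satisfies the defining conditions of DG-pure-projectivity.

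The argument is therefore entirely formal once the standard characterization of K-projective complexes via $\homcomplex$ is invoked; the main (minor) obstacle is simply making sure the reader sees that the two conditions defining "DG-pure-projective" — pure-projective components plus null-homotopy of maps into acyclics — line up one-to-one with "pure-projective components plus K-projective," so that the equivalence is a restatement rather than something requiring work. I would keep the proof to two or three sentences, citing the homology identity $H_n[\homcomplex(P,E)] = K(R)(P,\Sigma^{-n}E)$ from the Preliminaries as the one nontrivial ingredient, and note the parenthetical remark just before the lemma (that DG-projective $=$ K-projective with projective components) makes the pure-projective analog transparent.
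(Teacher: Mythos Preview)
Your proposal is correct and follows essentially the same approach as the paper: both arguments unwind the definition of K-projective (every chain map to an exact complex is null homotopic, equivalently $\homcomplex(P,E)$ is acyclic for all exact $E$) and observe that DG-pure-projective is literally ``pure-projective components $+$ K-projective.'' The paper phrases this as a short chain of equivalences rather than two separate implications, but the content is identical.
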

 
 \begin{proof}
 The definition of DG-pure-projective is equivalent to the statement that each $P_n$ is a pure-projective and $\homcomplex(P,E)$ is acyclic for all exact complexes $E$. This is equivalent to saying each $P_n$ is pure-projective and the morphism set $K(R)(P,E)=0$ for all exact complexes $E$. This means $P$ is K-projective, by definition.  
 \end{proof}
 
Recall that $\{S^n(R)\}$ cogenerates the usual projective model structure on $\ch$, where the cofibrant objects are the DG-projectives. (See~\cite[Example~3.3]{hovey}.) The next theorem is essentially constructing the same model structure, except it is abelian with respect to the exact category $\ch_{pur}$.

 \begin{proposition}\label{prop-DG-pure-projective-cot-pair}
  Let $\class{E}$ denote the class of all exact (acyclic) chain complexes, and let $\class{C}$ denote the class of all DG-pure-projective complexes. Then $(\class{C}, \class{E})$ is a complete cotorsion pair in the exact category $\ch_{pur}$. 
  
  This cotorsion pair is cogenerated by the set $\class{S} = \{S^n(R)\}$ in $\ch_{pur}$. Consequently, each DG-pure-projective complex is a direct summand of a transfinite degreewise pure extension of complexes in  $\class{S}\cup \{D^n(M_i)\}$, where $\{M_i\}$ is some representative set of all the isomorphism classes of finitely presented $R$-modules.
\end{proposition}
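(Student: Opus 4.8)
The plan is to apply Proposition~\ref{prop-cogeneration-by-set} to the set $\class{S} = \{S^n(R)\}$ and then identify the resulting cotorsion pair with $(\class{C},\class{E})$. That proposition tells us $\class{S}$ cogenerates a functorially complete cotorsion pair $(\leftperp{(\rightperp{\class{S}})}, \rightperp{\class{S}})$ in $\ch_{pur}$, with $\leftperp{(\rightperp{\class{S}})}$ consisting precisely of the direct summands of transfinite degreewise pure extensions of $\class{S}\cup\{D^n(M_i)\}$. So it suffices to prove $\rightperp{\class{S}} = \class{E}$ and $\leftperp{(\rightperp{\class{S}})} = \class{C}$; the final ``Consequently'' sentence is then immediate from that structural description.

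First I would compute $\rightperp{\class{S}}$. Since $R$ is (pure-)projective, any degreewise pure short exact sequence ending in $S^n(R)$ is automatically degreewise split, so $\Ext^1_{pur}(S^n(R),X) = \Ext^1_{dw}(S^n(R),X) \cong H_{n-1}\Hom_R(R,X) \cong H_{n-1}X$, exactly as in the proof of Theorem~\ref{thm-bounded above complexes of finitely presented cogenerate}. Hence $X\in\rightperp{\class{S}}$ if and only if $H_nX = 0$ for all $n$, that is, $\rightperp{\class{S}} = \class{E}$.

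Next I would show $\leftperp{\class{E}} = \class{C}$. For $\leftperp{\class{E}}\subseteq\class{C}$: given $P\in\leftperp{\class{E}} = \leftperp{(\rightperp{\class{S}})}$, the structural description from Proposition~\ref{prop-cogeneration-by-set}, together with the fact that $\class{PP}$ is closed under retracts and transfinite pure extensions (it is the left half of a cotorsion pair in $R\textnormal{-Mod}_{pur}$, and $R$ and each $M_i$ are pure-projective), shows each $P_n$ is pure-projective. Because $P$ has pure-projective components, $\Ext^1_{pur}(P,-) = \Ext^1_{dw}(P,-)$ on complexes, and since $\Sigma^n E\in\class{E}$ whenever $E\in\class{E}$, we get $\Ext^1_{dw}(P,\Sigma^n E) = 0$ for all $n$; by~\cite[Lemma~2.1]{gillespie} this means $\homcomplex(P,E)$ is acyclic, i.e. every chain map $P\to E$ into an exact complex is null homotopic, so $P$ is DG-pure-projective. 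Conversely, if $P\in\class{C}$ then each $P_n$ is pure-projective, so again $\Ext^1_{pur}(P,-) = \Ext^1_{dw}(P,-)$; and since $\Sigma^{-n}E$ is exact for every exact $E$, DG-pure-projectivity gives $H_n\homcomplex(P,E) = K(R)(P,\Sigma^{-n}E) = 0$ for all $n$, whence $\homcomplex(P,E)$ is acyclic and $\Ext^1_{dw}(P,E) = 0$ by the same lemma; therefore $\Ext^1_{pur}(P,E) = 0$ for all $E\in\class{E}$, i.e. $P\in\leftperp{\class{E}}$.

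I do not expect a serious obstacle, since the heavy lifting is already done in Proposition~\ref{prop-cogeneration-by-set} and the standard cotorsion-pair/$\Ext$ dictionary; the one point requiring care is the passage between $\Ext^1_{pur}$ and $\Ext^1_{dw}$ for the complexes in play, which is legitimate precisely because complexes in $\leftperp{(\rightperp{\class{S}})}$ — and DG-pure-projective complexes — have pure-projective components, making every degreewise pure extension by such a complex automatically degreewise split.
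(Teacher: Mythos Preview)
Your proposal is correct and follows essentially the same route as the paper: apply Proposition~\ref{prop-cogeneration-by-set} to $\class{S}=\{S^n(R)\}$, compute $\rightperp{\class{S}}=\class{E}$ via the standard isomorphism $\Ext^1_{pur}(S^n(R),X)\cong H_{n-1}X$, and then identify $\leftperp{\class{E}}=\class{C}$ using the $\Ext^1_{dw}$/$\homcomplex$ dictionary. The only minor tactical difference is in verifying that complexes in $\leftperp{\class{E}}$ have pure-projective components: you deduce this from the filtration description of $\leftperp{(\rightperp{\class{S}})}$ together with closure properties of $\class{PP}$, whereas the paper argues directly that $\Ext^1_{pur}(X,D^{n+1}(M))\cong\Ext^1_{\class{P}ur}(X_n,M)$ (via~\cite[Lemma~4.2(2)]{gillespie-G-derived}), so orthogonality to the disks in $\class{E}$ already forces each $X_n\in\class{PP}$; both arguments are valid and equally short.
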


 \begin{proof}
By Proposition~\ref{prop-cogeneration-by-set}, the set $\class{S}  = \{S^n(R)\}$ cogenerates a functorially complete cotorsion pair $(\leftperp{(\rightperp{\class{S}})}, \rightperp{\class{S}})$ in the exact category $\ch_{pur}$. Moreover, the class $\leftperp{(\rightperp{\class{S}})}$ consists precisely of direct summands of transfinite degreewise pure extensions of complexes in  $\class{S}\cup \{D^n(M_i)\}$, where $\{M_i\}$ is some representative set of all the isomorphism classes of finitely presented $R$-modules. 
We will show that $(\leftperp{(\rightperp{\class{S}})}, \rightperp{\class{S}}) = (\class{C}, \class{E})$. 
 
 First note that $X \in \rightperp{\class{S}}$ if and only if for all integers $n$ we have $$0 = \Ext^1_{pur}(S^n(R),X) = \Ext^1_{dw}(S^n(R),X) \cong  H_{n-1}\Hom_R(R,X) \cong H_{n-1}X$$ and so $\rightperp{\class{S}} = \class{E}$.
 
 It is left to see why $\leftperp{\class{E}} = \class{C}$. Note first that $X$ is a complex of pure-projectives if and only if for any module $M$ we have 
  $$0 = \Ext^1_{\class{P}ur}(X_n,M) \cong \Ext^1_{pur}(X, D^{n+1}(M))$$ where the isomorphism holds 
by~\cite[Lemma~4.2(2)]{gillespie-G-derived}.  Since any disk $D^{n+1}(M)$ is in the class $\class{E}$, and since $\class{E}$ is closed under suspensions we conclude that $\leftperp{\class{E}}$ consists precisely of the complexes of pure-projectives satisfying $\Ext^1_{pur}(X,\Sigma^n E) = \Ext^1_{dw}(X,\Sigma^n E) = 0$ for each $n$ and $E \in \class{E}$. But by a standard fact, for example see~\cite[Lemma~4.3]{gillespie-G-derived}, it is equivalent to state that $X$ is a complex of pure-projectives for which $H_n[\homcomplex(X,E)] = 0$ for all exact complexes $E$. This proves $\leftperp{\class{E}} = \class{C}$.
 \end{proof}

Again, we have that $(\class{C}, \class{E})$ is a projective cotorsion pair in $\ch_{pur}$.  
This all follows from Theorem~\ref{them-proj-cot-pair} by taking $\class{S}$ to be the set in the statement of Proposition~\ref{prop-DG-pure-projective-cot-pair}. We summarize its main properties in the following statement.

 \begin{corollary}\label{cor-DG-pure-projective-model}
 Let $\class{E}$ denote the class of all exact chain complexes, and let $\class{C}$ denote the class of all DG-pure-projective complexes. 
The triple $(\class{C}, \class{E}, All)$ is an abelian model structure on the exact category $\ch_{pur}$ and satisfies each of the following properties:
\begin{enumerate}
\item Its homotopy category recovers $\class{D}(R) := K(R)/\class{E}$, the usual derived category of $R$.
\item The model structure is finitely generated in the sense of~\cite[Section~7.4]{hovey-model-categories}. 
\item In the case that $R$ is commutative, the model structure is monoidal with respect to the usual tensor product of chain complexes. 
\end{enumerate}
\end{corollary}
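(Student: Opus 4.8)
The plan is to deduce the entire statement from Theorem~\ref{them-proj-cot-pair}, applied to the set $\class{S} = \{S^n(R)\}$ exactly as in Proposition~\ref{prop-DG-pure-projective-cot-pair}. First I would note that $\class{S}$ is a set of complexes of pure-projective modules closed under suspensions (since $\Sigma^m S^n(R) = S^{n+m}(R)$), so Theorem~\ref{them-proj-cot-pair} applies and immediately gives that $\mathfrak{M} = (\class{C}_{\class{S}}, \class{V}_{\class{S}}, All)$ is an abelian model structure on $\ch_{pur}$. Proposition~\ref{prop-DG-pure-projective-cot-pair} identifies $(\class{C}_{\class{S}}, \class{V}_{\class{S}}) = (\class{C}, \class{E})$, so this is precisely the asserted triple $(\class{C}, \class{E}, All)$.

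For (1), I would invoke Theorem~\ref{them-proj-cot-pair}(3), which gives $\textnormal{Ho}(\mathfrak{M}) \cong K(R)/\class{V}_{\class{S}}$; since $\class{V}_{\class{S}} = \rightperp{\class{S}} = \class{E}$ by Proposition~\ref{prop-DG-pure-projective-cot-pair}, this Verdier quotient is $K(R)/\class{E}$, which is by definition $\class{D}(R)$. For (2), I would observe that each $S^n(R)$ is a bounded (in fact one-term) complex of finitely presented modules, since $R$ is finitely presented over itself; hence Theorem~\ref{them-proj-cot-pair}(5) applies and shows $\mathfrak{M}$ is finitely generated in the sense of \cite[Section~7.4]{hovey-model-categories}, and consequently $\textnormal{Ho}(\mathfrak{M}) = \class{D}(R)$ is compactly generated.

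For (3), with $R$ commutative, it remains only to check the two hypotheses of Theorem~\ref{them-proj-cot-pair}(6). That $S^0(R) \in \class{C}_{\class{S}}$ is immediate, as $S^0(R) \in \class{S} \subseteq \class{C}_{\class{S}}$. The one genuine point is that $\class{C}$ is closed under the chain complex tensor product, and here I would use Lemma~\ref{lemma-DG-pp}, which says $\class{C}$ consists of the K-projective complexes whose components are pure-projective. Given $P, Q \in \class{C}$, the components of $P \tensor_R Q$ are finite direct sums of modules $P_i \tensor_R Q_j$, which are pure-projective because $\class{PP}$ is closed under tensor products (a tensor product of finitely presented modules is finitely presented, and $\tensor_R$ commutes with direct sums) and under direct sums. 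Moreover $P \tensor_R Q$ is K-projective: for any exact complex $E$, the hom-tensor adjunction for chain complexes gives $\homcomplex(P \tensor_R Q, E) \cong \homcomplex(P, \homcomplex(Q, E))$, and since $Q$ is K-projective the complex $\homcomplex(Q, E)$ is acyclic, whence $\homcomplex(P, \homcomplex(Q, E))$ is acyclic because $P$ is K-projective. Thus $P \tensor_R Q \in \class{C}$, and Theorem~\ref{them-proj-cot-pair}(6) completes the proof.

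The only real obstacle is the tensor-closure of $\class{C}$ needed for (3); everything else is a direct appeal to Theorem~\ref{them-proj-cot-pair} together with the identifications made in Proposition~\ref{prop-DG-pure-projective-cot-pair}.
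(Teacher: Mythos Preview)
Your proposal is correct and follows essentially the same route as the paper: both deduce everything from Theorem~\ref{them-proj-cot-pair} applied to $\class{S}=\{S^n(R)\}$, with the only nontrivial point being the tensor-closure of $\class{C}$ needed for~(3), which both arguments handle via Lemma~\ref{lemma-DG-pp} (reducing to closure of pure-projectives and of K-projectives under $\tensor_R$, the latter by the enriched adjoint associativity $\homcomplex(P\tensor_R Q,E)\cong\homcomplex(P,\homcomplex(Q,E))$). The paper simply refers back to the proof of Corollary~\ref{cor-pure-proj-model} for the pure-projective part, whereas you spell it out directly; otherwise the arguments coincide.
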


\begin{proof}
Let $R$ be commutative and let us just comment on why the monoidal criteria of Theorem~\ref{them-proj-cot-pair} is satisfied. 
It is only required to show that $X \tensor_R Y \in \class{C}$ whenever $X,Y \in \class{C}$.  Certainly, $X \tensor_R Y$ is a complex of pure-projectives as shown in the proof of Corollary~\ref{cor-pure-proj-model}. So by Lemma~\ref{lemma-DG-pp} it only remains to show that $X \tensor_R Y$ is a K-projective complex. But it is standard that K-projectives are closed under tensor products. For examples, it follows from the enriched adjoint associativity isomorphism: $\homcomplex(X \tensor_R Y, E) \cong \homcomplex(X ,\homcomplex(Y,E))$.
\end{proof}
 
Returning to the notation of Section~\ref{sec-acyclic-pp}, recall that we have set $\class{V} = \rightperp{\exclass{PP}}$, where $\exclass{PP}$ denotes the class of all acyclic complexes of pure-projective $R$-modules. Then $\class{V}$ is the thick subcategory of $K(R)$ consisting of the K-absolutely pure complexes to appear in~\cite{emmanouil-kaperonis-K-flatness-pure}. The Verdier quotient satisfies $K(R)/\class{V} \cong K_{ac}(\class{PP})$, where $K_{ac}(\class{PP})$ is the homotopy category of all acyclic complexes of pure-projectives. 

\begin{theorem}\label{theorem-recollement-pp}
We have a recollement of triangulated categories
\[
\xy
(-28,0)*+{K(R)/\class{V}};
(0,0)*+{\class{D}_{pur}(R)};
(25,0)*+{\class{D}(R)};
{(-19,0) \ar (-10,0)};
{(-10,0) \ar@<0.5em> (-19,0)};
{(-10,0) \ar@<-0.5em> (-19,0)};
{(10,0) \ar (19,0)};
{(19,0) \ar@<0.5em> (10,0)};
{(19,0) \ar@<-0.5em> (10,0)};
\endxy
\] which when restricting the first two categories to cofibrant objects becomes 
\[
\xy
(-28,0)*+{K_{ac}(\class{PP})};
(0,0)*+{K(\class{PP})};
(25,0)*+{\class{D}(R)};
{(-19,0) \ar (-10,0)};
{(-10,0) \ar@<0.5em> (-19,0)};
{(-10,0) \ar@<-0.5em> (-19,0)};
{(10,0) \ar (19,0)};
{(19,0) \ar@<0.5em> (10,0)};
{(19,0) \ar@<-0.5em> (10,0)};
\endxy
.\]  
In particular, the Verdier quotient $K(R)/\class{V} \cong  K_{ac}(\class{PP})$ is compactly generated. 
\end{theorem}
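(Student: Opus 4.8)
The plan is to realize the recollement as an instance of the machinery of~\cite{gillespie-recollement} and then to upgrade $K(R)/\class{V}$ from well-generated (Corollary~\ref{cor-exPP-model}) to compactly generated by a formal argument using the recollement together with the compact generation of the middle term. So first I would assemble the three abelian model structures on the exact category $\ch_{pur}$ constructed above: the pure-projective model $(\dwclass{PP},\class{A}_{pur},All)$ of Corollary~\ref{cor-pure-proj-model}, with homotopy category $\class{D}_{pur}(R)$; the acyclic pure-projective model $(\exclass{PP},\class{V},All)$ of Corollary~\ref{cor-exPP-model}, with homotopy category $K(R)/\class{V}$; and the DG-pure-projective model $(\class{C},\class{E},All)$ of Corollary~\ref{cor-DG-pure-projective-model}, with homotopy category $\class{D}(R)$. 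All three are cofibrantly generated hereditary abelian model structures sharing $All$ as the class of fibrant objects, and the data needed to glue the first and third into a recollement is immediate: $\class{C}\subseteq\dwclass{PP}$ (equivalently $\class{A}_{pur}\subseteq\class{E}$, since a pure acyclic complex is acyclic), together with $\dwclass{PP}\cap\class{E}=\exclass{PP}$ --- the last identity being nothing but the definition of $\exclass{PP}$ as the class of acyclic complexes of pure-projective modules. Feeding this into the main result of~\cite{gillespie-recollement} produces the recollement of homotopy categories $K(R)/\class{V}\to\class{D}_{pur}(R)\to\class{D}(R)$, in which $K(R)/\class{V}$ appears as the image of the fully faithful inclusion $i_*$ and $\class{D}(R)$ as the Verdier quotient of $\class{D}_{pur}(R)$ by $K(R)/\class{V}$.

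Next I would restrict to cofibrant objects. Since $(\dwclass{PP},\class{A}_{pur},All)$ and $(\exclass{PP},\class{V},All)$ are projective cotorsion pairs, Theorem~\ref{them-proj-cot-pair}(3) identifies their homotopy categories with the homotopy categories of their cofibrant objects, i.e.\ $\class{D}_{pur}(R)\cong K(\class{PP})$ (Corollary~\ref{cor-pure-proj-model}(1)) and $K(R)/\class{V}\cong K_{ac}(\class{PP})$ (Corollary~\ref{cor-exPP-model}(2)). Under these equivalences the recollement becomes $K_{ac}(\class{PP})\to K(\class{PP})\to\class{D}(R)$, with $K_{ac}(\class{PP})$ realized as the kernel of the composite $K(\class{PP})\hookrightarrow K(R)\to\class{D}(R)$ that annihilates the acyclic complexes; this is the advertised pure analogue of Krause's recollement.

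For the compact generation I would invoke a standard fact about recollements. The middle term $\class{D}_{pur}(R)$ is compactly generated by Corollary~\ref{cor-pure-proj-model}(2) (its model structure is finitely generated, being the case of Theorem~\ref{them-proj-cot-pair}(5) in which $\class{T}=\{S^n(M_i)\}$ consists of bounded complexes of finitely presented modules), and $\class{T}$ is a set of compact generators. Now in any recollement $\mathcal{T}'\xrightarrow{i_*}\mathcal{T}\xrightarrow{j^*}\mathcal{T}''$ the inclusion $i_*$ belongs to an adjoint triple $i^*\dashv i_*\dashv i^!$, so $i_*$ preserves coproducts; hence its left adjoint $i^*$ preserves compact objects, and for any set $\{G_\lambda\}$ of compact generators of $\mathcal{T}$ the set $\{i^*G_\lambda\}$ is a set of compact generators of $\mathcal{T}'$ --- compactness because $i_*$ commutes with coproducts, and generation because $i_*$ is fully faithful and $\Hom_{\mathcal{T}'}(i^*G_\lambda,\Sigma^n X)\cong\Hom_{\mathcal{T}}(G_\lambda,\Sigma^n i_*X)$ for all $X\in\mathcal{T}'$ and $n\in\Z$. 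Applying this with $\mathcal{T}'=K(R)/\class{V}$ and $\mathcal{T}=\class{D}_{pur}(R)$ yields that $K(R)/\class{V}\cong K_{ac}(\class{PP})$ is compactly generated, indeed with the explicit generating set $\{i^*S^n(M_i)\}$.

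The step I expect to cost the most care is the first: checking that the three model structures satisfy the precise structural hypotheses of~\cite{gillespie-recollement} (cofibrant generation, hereditariness, and the shared class of fibrant objects) and, above all, confirming that $K(R)/\class{V}$ --- and not $\class{D}(R)$ --- is the recollement \emph{subcategory}, so that the right adjoint $i^!$, hence the coproduct-preservation of $i_*$, is genuinely available. Once these structural points are in hand, the passage to cofibrant objects and the compact generation are both purely formal.
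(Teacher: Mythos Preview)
Your proposal is correct and follows essentially the same route as the paper: assemble the three projective cotorsion pairs from Corollaries~\ref{cor-pure-proj-model}, \ref{cor-exPP-model}, \ref{cor-DG-pure-projective-model}, feed the identity $\dwclass{PP}\cap\class{E}=\exclass{PP}$ into the recollement machinery of~\cite{gillespie-recollement,gillespie-recoll2}, then deduce compact generation of $K_{ac}(\class{PP})$ from that of $K(\class{PP})$ via the left adjoint of the coproduct-preserving inclusion. Your compact-generation paragraph is in fact a slightly more detailed version of the paper's ``exercise'' step, and your closing worry about which side of the recollement $K(R)/\class{V}$ sits on is handled automatically by the cited theorems once the containment $\class{A}_{pur}\subseteq\class{E}$ is noted.
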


\begin{proof}
The recollement follows from~\cite[Theorem~4.7]{gillespie-recollement} or~\cite[Theorem~3.5]{gillespie-recoll2}:  For we have three projective cotorsion pairs $$\mathfrak{M}_1 = (\dwclass{PP}, \class{A}_{pur}) \ , \ \ \ \ \mathfrak{M}_{2} = (\exclass{PP}, \class{V}) \ , \ \ \ \ \mathfrak{M}_{3} = (\class{C}, \class{E}),$$
coming respectively from Corollary~\ref{cor-pure-proj-model}, Corollary~\ref{cor-exPP-model}, and Corollary~\ref{cor-DG-pure-projective-model}.
 They satisfy the hypotheses of~\cite[Theorem~3.5]{gillespie-recoll2} so we have a recollement. 

Let us show that $K(R)/\class{V} \cong  K_{ac}(\class{PP})$ is compactly generated. 
As we have already pointed out, the pure derived category $\class{D}_{pur}(R)$, and the usual derived category $\class{D}(R)$, are compactly generated. It follows that $\textnormal{Ho}(\mathfrak{M}_2) \cong K(R)/\class{V} \cong K_{ac}(\class{PP}) $ is compactly generated too. Indeed the inclusion $I : K_{ac}(\class{PP}) \xrightarrow{} K(\class{PP})$ has both a left and a right adjoint, $I$ clearly preserves coproducts, and $K(\class{PP})$ is compactly generated (being equivalent to $\class{D}_{pur}(R)$). Now it is an exercise to check that the left adjoint of $I$ carries a set of compact weak generators for $K(\class{PP})$ to a set of compact weak generators for $K_{ac}(\class{PP})$.
\end{proof} 

\begin{remark}
We can find an explicit set of compact weak generators for $K_{ac}(\class{PP})$ as follows. Let $\class{T} = \{S^n(M_i)\}$ be the set of all spheres on any set $\{M_i\}$ of isomorphic representatives for the class of all finitely presented $R$-modules. By Corollary~\ref{cor-pure-proj-model}, $\class{T}$ is a set of compact week generators for $K(\class{PP})$. Then~\cite[Theorem~3.5]{gillespie-recoll2} shows that the left adjoint of the inclusion $I : K_{ac}(\class{PP}) \xrightarrow{} K(\class{PP})$ is obtained by taking a special $\class{E}$-preenvelope using enough injectives of the cotorsion pair $\mathfrak{M}_{3} = (\class{C}, \class{E})$. In more detail, the image of any $S^n(M_i) \in \{S^n(M_i)\}$ under the left adjoint $\lambda$, is obtained by way of a short exact sequence  in $\ch_{pur}$
$$ 0\xrightarrow{} S^n(M_i) \xrightarrow{} E \xrightarrow{} C \xrightarrow{}0$$ where $E$ is exact and $C$ is DG-pure-projective. Note then that $E$ is a (degreewise pure)  extension of two complexes in $\dwclass{PP}$, and hence $E \in \class{E} \cap \dwclass{PP} = \exclass{PP}$. Denoting such a complex $E$ by $\lambda(S^n(M_i))$, then we have that $\{\lambda(S^n(M_i))\}$ is a set of compact weak generators for $K_{ac}(\class{PP})$.
\end{remark}

\subsection{The K-absolutely pure model structure for the derived category} 
We end by noting that the class $\class{V}$ of K-absolutely pure compexes are the fibrant objects of an abelian model structure for $\class{D}(R)$, the usual derived catgory of $R$. The class $\dwclass{PP}$ of all complexes of pure-projectives are the cofibrant objects.

\begin{theorem}\label{theorem-derived-cat-K-pur}
 Let $R$ be any ring and $\class{D}(R)$ denote its derived category. 
 There is a cofibrantly generated abelian model structure on the exact category $\ch_{pur}$ represented by the Hovey triple $$\mathfrak{M}^{K\text{-abs}}_{\textnormal{der}} = (\dwclass{PP},\class{E},\class{V}).$$
We have triangulated category equivalences
$$\class{D}(R) \cong \textnormal{Ho}(\mathfrak{M}^{K\text{-abs}}_{\textnormal{der}}) \cong K(\class{PP}) \cap \class{V},$$
that is, $\class{D}(R)$ is equivalent to the chain homotopy category of all K-absolutely pure complexes with pure-projective components.
 \end{theorem}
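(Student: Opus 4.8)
The plan is to exhibit $\mathfrak{M}^{K\text{-abs}}_{\textnormal{der}}=(\dwclass{PP},\class{E},\class{V})$ as a Hovey triple on the exact category $\ch_{pur}$, using Hovey's correspondence~\cite{hovey} in the exact form of~\cite{gillespie-hereditary-abelian-models}, and then to identify its homotopy category. Concretely, to produce the model structure it suffices to check: (i) $\class{E}$ is thick in $\ch_{pur}$; (ii) $(\dwclass{PP}\cap\class{E},\,\class{V})$ is a complete cotorsion pair; and (iii) $(\dwclass{PP},\,\class{E}\cap\class{V})$ is a complete cotorsion pair. Item (i) is routine: a degreewise pure short exact sequence of complexes is in particular a short exact sequence of complexes, hence induces a long exact homology sequence, so $\class{E}$ has the two-out-of-three property, and it is clearly closed under direct summands.

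For (ii), note that $\dwclass{PP}\cap\class{E}=\exclass{PP}$ essentially by definition, so the pair in question is $(\exclass{PP},\class{V})$, which is a complete cotorsion pair cogenerated by a set by Theorem~\ref{theorem-acyclic complexes of pure-projectives}. For (iii) the crux is the identification $\class{E}\cap\class{V}=\class{A}_{pur}$. The inclusion $\class{A}_{pur}\subseteq\class{E}\cap\class{V}$ is immediate: pure acyclic complexes are acyclic, and $\class{A}_{pur}\subseteq\rightperp{\exclass{PP}}=\class{V}$ because $\exclass{PP}\subseteq\dwclass{PP}$ and $(\dwclass{PP},\class{A}_{pur})$ is a cotorsion pair (Theorem~\ref{thm-bounded above complexes of finitely presented cogenerate}). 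Conversely, given $Y\in\class{E}\cap\class{V}$, completeness of $(\dwclass{PP},\class{A}_{pur})$ yields a short exact sequence $0\to Y\to A\to C\to 0$ in $\ch_{pur}$ with $A\in\class{A}_{pur}$ and $C\in\dwclass{PP}$; since $Y$ and $A$ are acyclic, the homology long exact sequence forces $C$ to be acyclic, so $C\in\dwclass{PP}\cap\class{E}=\exclass{PP}$; then $\Ext^1_{pur}(C,Y)=0$ because $Y\in\class{V}=\rightperp{\exclass{PP}}$, so the sequence splits and $Y$ is a direct summand of $A\in\class{A}_{pur}$, hence $Y\in\class{A}_{pur}$. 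Thus the pair in (iii) is $(\dwclass{PP},\class{A}_{pur})$, which is complete and cogenerated by the set $\class{T}=\{S^n(M_i)\}$ by Theorem~\ref{thm-bounded above complexes of finitely presented cogenerate}. Hovey's correspondence now gives the abelian model structure $\mathfrak{M}^{K\text{-abs}}_{\textnormal{der}}$, and it is cofibrantly generated because both of its associated cotorsion pairs are cogenerated by sets.

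It remains to compute $\textnormal{Ho}(\mathfrak{M}^{K\text{-abs}}_{\textnormal{der}})$. Its class of trivial objects is $\class{E}$, the class of all acyclic complexes; as for Corollary~\ref{cor-DG-pure-projective-model}(1) (and mirroring Theorem~\ref{them-proj-cot-pair}(2)), this forces the weak equivalences to be exactly the homology isomorphisms: a degreewise pure monomorphism (resp. epimorphism) is a weak equivalence iff its cokernel (resp. kernel) lies in $\class{E}$, hence is a quasi-isomorphism, and every quasi-isomorphism factors as such a monomorphism followed by such an epimorphism through a mapping cylinder. Therefore $\textnormal{Ho}(\mathfrak{M}^{K\text{-abs}}_{\textnormal{der}})$ is the localization of $\ch$ at the quasi-isomorphisms, namely $\class{D}(R)=K(R)/\class{E}$. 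For the last equivalence, invoke the standard description of the homotopy category of an abelian model structure (as used for~\cite[Theorem~3.9]{gillespie-K-flat} and Theorem~\ref{them-proj-cot-pair}(3)): $\textnormal{Ho}(\mathfrak{M}^{K\text{-abs}}_{\textnormal{der}})$ is equivalent to the full subcategory of $K(R)$ on the bifibrant objects $\dwclass{PP}\cap\class{V}$, the model homotopy relation restricting to chain homotopy on these objects because the bi-trivial objects $\dwclass{PP}\cap\class{E}\cap\class{V}=\exclass{PP}\cap\class{V}$ are precisely the categorical projectives $\widetilde{\class{PP}}$ of $\ch_{pur}$, i.e. the contractible complexes of pure-projectives (see the proof of Theorem~\ref{them-proj-cot-pair} and Corollary~\ref{cor-pp-contractible}), and every null homotopic map out of a complex in $\dwclass{PP}$ factors through a complex of the form $\oplus D^{n+1}(X_n)\in\widetilde{\class{PP}}$. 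Since $\class{V}$ is a thick subcategory of $K(R)$ and hence closed under homotopy equivalence, this subcategory is exactly $K(\class{PP})\cap\class{V}$, giving $\textnormal{Ho}(\mathfrak{M}^{K\text{-abs}}_{\textnormal{der}})\cong K(\class{PP})\cap\class{V}$.

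The main obstacle is step (iii), namely the identification $\class{E}\cap\class{V}=\class{A}_{pur}$; everything else is assembly of the already-established Theorems~\ref{thm-bounded above complexes of finitely presented cogenerate} and~\ref{theorem-acyclic complexes of pure-projectives} together with standard facts about abelian model structures, the one point needing a little care being the verification that the model homotopy relation on bifibrant objects coincides with chain homotopy.
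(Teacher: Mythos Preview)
Your proof is correct and follows essentially the same route as the paper: the paper also reduces everything to verifying $\class{E}\cap\class{V}=\class{A}_{pur}$ and proves the nontrivial inclusion by taking a special $\class{A}_{pur}$-preenvelope $0\to Y\to A\to P\to 0$ and arguing it splits, exactly as you do. Your write-up is in fact more explicit than the paper's about why the sequence splits (the paper just says ``it is easy to argue'') and about the identification of the homotopy category, but the underlying argument is the same.
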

 
 \begin{proof}
 Putting together what we have already shown in Theorems~\ref{thm-bounded above complexes of finitely presented cogenerate} and~\ref{theorem-acyclic complexes of pure-projectives}, the proof is routine. We have $\dwclass{PP}\cap\class{E} = \exclass{PP}$ which is left orthogonal to $\class{V}$. We only need need to confirm that $\class{E}\cap\class{V} = \class{A}_{pur}$ is the class of all pure acyclic complexes. Clearly $\class{E}\cap\class{V} \supseteq \class{A}_{pur}$. To show $\class{E}\cap\class{V} \subseteq  \class{A}_{pur}$, let $X \in \class{E}\cap\class{V}$, and use completeness of the cotorsion pair $(\dwclass{PP},\class{A}_{pur})$ to write a (degreewise pure) short exact sequence 
 $$0 \xrightarrow{} X \xrightarrow{} A \xrightarrow{} P \xrightarrow{} 0$$ where $A \in \class{A}_{pur}$ and $P \in \dwclass{PP}$. 
It is easy to argue that the short exact sequence must split, forcing $X$ to be a direct summand of $A$, and so is itself pure acyclic. 
 \end{proof}
 
 \section*{Acknowledgements}
 Upon completing this work, the author learned of the independent work of Ioannis Emmanouil and his student Ilias Kaperonis~\cite{emmanouil-kaperonis-K-flatness-pure}. The author thanks Emmanouil and Kaperonis for sharing their preliminary work on K-absolutely pure complexes which will appear in a forthcoming preprint~\cite{emmanouil-kaperonis-K-flatness-pure}. Their article is a must read and complements the present article.


\end{document}